\newif\ifdanish
\newif\ifbourbakiintervals
\newif\ifshowlabels
\newif\ifindex
\newif\iffront
\newif\ifprenumber % if theorems should use format number. type
		\let\kp@lhook\lhook
		\renewcommand{\lhook}{\mathrel{\text{\fix@lhook}}}
		\newcommand\fix@lhook{%
		  \raisebox{.045ex}{%
		    \scalebox{.85}{$\m@th\kp@lhook$}%
		  }%
		}
\g@addto@macro\bfseries{\boldmath} %make math in bold text automatically bold
\tikzset{
  %>/.tip={Stealth[length=2.7pt, width=4.4pt, inset=1.8pt]},
  >/.tip={Stealth[length=2.9pt, width=4.4pt, inset=1.8pt]},
  tikzcd left hook/.tip={Hooks[
	  left,
	  length=2pt,
	  width=5.5pt,
	]},
  iso/.style={
    every to/.append style={
      edge node={
        node [sloped, allow upside down]{
          \raisebox{0.1em}[0pt][0pt]{\ensuremath{\sim}}
        }
      }
    }
  },
  iso'/.style={
    every to/.append style={
      edge node={
        node [sloped, allow upside down]{
          \raisebox{-0.6em}[0pt][0pt]{\ensuremath{\sim}}
        }
      }
    }
  },
  symbol/.style={
      draw=none,
      every to/.append style={
        edge node={node [sloped, allow upside down, auto=false]{$#1$}}}
  },
}
\newcommand\pushout{\arrow[ul,phantom,"\lrcorner",very near start]}
\newcommand\pullback{\arrow[dr,phantom,"\ulcorner",very near start]}
\newcommand\invdot{\mathrlap{.}}
\newcommand\danishtheorems{
	\def\@nthm@theorem{Sætning}
	\def\@nthm@proposition{Proposition}
	\def\@nthm@corollary{Korollar}
	\def\@nthm@lemma{Lemma}
	\def\@nthm@claim{Påstand}
	\def\@nthm@definition{Definition}
	\def\@nthm@example{Eksempel}
	\def\@nthm@exercise{Opgave}
	\def\@nthm@remark{Bemærkning}
	\def\@nthm@proof{Bevis}
}
\newcommand\englishtheorems{
	\def\@nthm@theorem{Theorem}
	\def\@nthm@proposition{Proposition}
	\def\@nthm@corollary{Corollary}
	\def\@nthm@lemma{Lemma}
	\def\@nthm@claim{Claim}
	\def\@nthm@definition{Definition}
	\def\@nthm@example{Example}
	\def\@nthm@exercise{Exercise}
	\def\@nthm@remark{Remark}
	\def\@nthm@proof{Proof}
}
	\theoremstyle{notitle}
	\theoremstyle{postnumpara}
\newtheorem{para}[equation]{}
	\theoremstyle{prenumber}
	\theoremstyle{customplain}
\newtheorem{theorem}[equation]{\@nthm@theorem}
\newtheorem{corollary}[equation]{\@nthm@corollary}
\newtheorem{proposition}[equation]{\@nthm@proposition}
\newtheorem{lemma}[equation]{\@nthm@lemma}
\newtheorem{remark}[equation]{\@nthm@remark}
\newtheorem{example}[equation]{\@nthm@example}
\theoremstyle{notitle}
\theoremstyle{nonumberplainflex}
\newtheorem{proof}{\@nthm@proof}
\theoremstyle{nonumberplainflex}
	\theoremstyle{notitlebreak}
	\theoremstyle{postnumparabreak}
	\theoremstyle{prenumberbreak}
	\theoremstyle{custombreak}
\newtheorem{theorembreak}[equation]{\@nthm@theorem}
\newtheorem{propositionbreak}[equation]{\@nthm@proposition}
\newtheorem{lemmabreak}[equation]{\@nthm@lemma}
\newtheorem{examplebreak}[equation]{\@nthm@example}
\theoremstyle{notitlebreak}
\theoremstyle{nonumberbreakflex}
\theoremstyle{nonumberbreakflex}
	\crefname{para}{}{}
\crefname{theorem}{Sætning}{Sætningerne}
\crefname{definition}{Definition}{Definitionerne}
\crefname{example}{Eksempel}{Eksempler}
\crefname{corollary}{Korollar}{Korollarerne}
\crefname{proposition}{Proposition}{Propositionerne}
\crefname{lemma}{Lemma}{Lemmaerne}
\crefname{remark}{Bemærkning}{Bemærkningerne}
\crefname{claim}{Påstand}{Påstandene}
\crefname{exercise}{Opgave}{Opgaverne}
\crefname{textexercise}{Opgave}{Opgaverne}
\crefname{section}{afsnit}{afsnittene}
\crefname{equation}{ligning}{ligningerne}
\crefname{figure}{figur}{figurerne}
\crefname{table}{tabel}{tabellerne}
\crefname{page}{side}{siderne}
\crefname{part}{del}{delene}
\crefname{chapter}{kapitel}{kapitlerne}
\crefname{section}{afsnit}{afsnittene}
\crefname{appendix}{appendiks}{appendicer}
\crefname{enumi}{punkt}{punkterne}
\crefname{footnote}{fodnote}{fodnoterne}
\crefname{line}{linje}{linjerne}
	\crefname{para}{}{}
\crefname{theorem}{Theorem}{Theorems}
\crefname{definition}{Definition}{Definitions}
\crefname{example}{Example}{Examples}
\crefname{corollary}{Corollary}{Corollaries}
\crefname{proposition}{Proposition}{Propositions}
\crefname{lemma}{Lemma}{Lemmata}
\crefname{remark}{Remark}{Remarks}
\crefname{claim}{Claim}{Claims}
\crefname{exercise}{Exercise}{Exercises}
\crefname{textexercise}{Exercise}{Ecercises}
\newlist{paralist}{enumerate}{2}
\setlist[paralist]{
	label=\textup{(\roman*)},
	ref=\thepara\textup{(\roman*)},
	resume,
}
\newlist{theoremlist}{enumerate}{2}
\setlist[theoremlist]{
	label=\textup{(\roman*)},
	ref=\thepara\textup{(\roman*)},
	resume,
}
\newlist{corollarylist}{enumerate}{2}
\setlist[corollarylist]{
	label=\textup{(\roman*)},
	ref=\thepara\textup{(\roman*)},
	resume,
}
\newlist{propositionlist}{enumerate}{2}
\setlist[propositionlist]{
	label=\textup{(\roman*)},
	ref=\thepara\textup{(\roman*)},
	resume,
}
\newlist{lemmalist}{enumerate}{2}
\setlist[lemmalist]{
	label=\textup{(\roman*)},
	ref=\thepara\textup{(\roman*)},
	resume,
}
\newlist{definitionlist}{enumerate}{2}
\setlist[definitionlist]{
	label=\textup{(\roman*)},
	ref=\thepara\textup{(\roman*)},
	resume,
}
\newlist{textexerciselist}{enumerate}{2}
\setlist[textexerciselist]{
	label=\textup{(\roman*)},
	ref=\thepara\textup{(\roman*)},
	resume,
}
\newlist{remarklist}{enumerate}{2}
\setlist[remarklist]{
	label=\textup{(\roman*)},
	ref=\thepara\textup{(\roman*)},
	resume,
}
\newlist{examplelist}{enumerate}{2}
\setlist[examplelist]{
	label=\textup{(\roman*)},
	ref=\thepara\textup{(\roman*)},
	resume,
}
\newlist{exerciselist}{enumerate}{2}
\setlist[exerciselist]{
	label=\textup{(\roman*)},
	ref=\thepara\textup{(\roman*)},
	resume,
}
\newlist{claimlist}{enumerate}{2}
\setlist[claimlist]{
	label=\textup{(\roman*)},
	ref=\thepara\textup{(\roman*)},
	resume,
}
\newcounter{localreftmpcnt} %
\newcommand\alphsubformat[1]{\textup{(\roman{#1})}} %adapt ....
\newcommand\localref[2][\alphsubformat]{%
	\ifcsname r@#2@cref\endcsname
	\cref@getcounter {#2}{\mylabel}%
	\setcounter{localreftmpcnt}{\mylabel}%
	\hyperref[#2]{%
		\alphsubformat{localreftmpcnt}%
	}%
	\else ?? \fi}   
\title{Homotopy (co)limits via homotopy (co)ends in general combinatorial model categories}
\date{\today}
\author{Sergey Arkhipov\textsuperscript{a}
and
Sebastian Ørsted\textsuperscript{b}
\\
\tiny\textsuperscript{b}Matematisk Institut, Aarhus Universitet,
\\[-.5em]
\tiny Ny Munkegade~118, 8000 Aarhus~C, Denmark
\\[-.5em]
\tiny\href{mailto:hippie@math.au.dk}{hippie@math.au.dk}
\\
\tiny\textsuperscript{b}Matematisk Institut, Aarhus Universitet,
\\[-.5em]
\tiny Ny Munkegade~118, 8000 Aarhus~C, Denmark
\\[-.5em]
\tiny\href{mailto:sorsted@gmail.com}{sorsted@gmail.com}}
\numberwithin{equation}{section}
\newmap\mconst{\operatorname{const}}
\newvar\valpha{\alpha}
\newvar\vbeta{\beta}
\newvar\vgamma{\gamma}
\newvar\va{a}
\newvar\vb{b}
\newvar\vc{c}
\newvar\vd{d}
\newvar\ve{e}
\newvar\vf{f}
\newvar\vg{g}
\newvar\vh{h}
\newvar\vv{v}
\newvar\vA{A}
\newvar\vB{B}
\newvar\vC{C}
\newvar\vD{D}
\newvar\vE{E}
\newvar\vX{X}
\newvar\vY{Y}
\newvar\vZ{Z}
\newvar\vR{R}
\newvar\vS{S}
\newmap\miota{\iota}
\newmap\mvarphi{\varphi}
\newmap\malpha{\alpha}
\newmap\mbeta{\beta}
\newmap\mtheta{\theta}
\newmap\mpsi{\psi}
\newmap\meta{\eta}
\newmap\mr{r}
\newmap\mW{W}
\newmap\comult{\Delta}
\newmap\counit{\eta}
\newmap\ca{\mathup{ca}}
\newmap\mult{m}
\newmap\mR{R}
\newmap\mS{S}
\newclassset\setA{A}
\newclassset\setB{B}
\newclassset\setC{C}
\newclassset\setX{X}
\newclassset\setY{Y}
\newclassset\setZ{Z}
\newclassset\setU{U}
\newclassset\setV{V}
\newclassset\setW{W}
\newalg\algG{G}
\newalg\algH{H}
\newalg\algA{A}
\newalg\algB{B}
\newalg\algC{C}
\newalg\algM{M}
\newalg\algN{N}
\newalg\algP{P}
\newalgmap\catA{\mathscr{A}}
\newalgmap\catB{\mathscr{B}}
\newalgmap\catC{\mathscr{C}}
\newalgmap\catD{\mathscr{D}}
\newalgmap\catE{\mathscr{E}}
\newalgmap\catV{\mathscr{V}}
\newalgmap\catGamma{\Gamma}
\newalgmap\catdelta{\mathbf{\Deltaup}}[
	singlekeys={
		{plus}{lower=+},
	},
]
\newcategory\catsset{SSet}
\newcategory\catvarsset{\catset}[diag=\catdelta]
\newsimplicial\ssetX{X}
\newsimplicial\ssetY{Y}
\newsimplicial\ssetZ{Z}
\newsimplicial\ssetK{K}
\newsimplicial\ssetL{L}
\newcosimplicial\cosimpX{X}
\newcosimplicial\cosimpY{Y}
\newcosimplicial\simp{\Deltaup}[
	d={\!},iffirstd=true,
	output=simplicial,
]
\newcosimplicial\horn{\Lambdaup}[
	d={\!},iffirstd=true,output=simplicial,
]
\newalgmap\nerve{N}[
	output=simplicial,
]
\newdelim\ordset{[}{]}[
	output=alg,
	singlekeys={
		{plus}{outputoptions={upper=+}},
		{+}{plus},
	},
]
\newalgop\Prod{\prod}
\newalgop\Coprod{\coprod}
\newalgop\dirsum{\bigoplus}
\newautomathoperator\pushout{\smallcoprod}
\newautomathoperator\varpushout{\mathbin{\textstyle\coprod}}
\newautomathoperator\pullback{\smallprod}
\newautomathoperator\varpullback{\mathbin{\textstyle\prod}}
\newautomathoperator\bigpullback{\prod}
\newautomathoperator\varbigpullback{\bigtimes}
\newalgop\catob{\operatorname{Ob}}
\newalgop\Eq{\operatorname{Eq}}[par]
\newalgop\Coeq{\operatorname{Coeq}}[par]
\newmodelstructure\modelstructureproj{\operatorname{Proj}}
\newmodelstructure\modelstructureinj{\operatorname{Inj}}
\newmodelstructure\modelstructurereedy{\operatorname{Reedy}}
\newcommand\noloc{%
  \nobreak
  \mspace{6mu plus 1mu}
  {:}
  \nonscript\mkern-\thinmuskip
  \mathpunct{}
  \mspace{2mu}
}
\newmap\mprojcof{\mathscr{F}}[output=map]
\newreplacementfunctor\mcofrep{Q}
\newreplacementfunctor\mfibrep{R}
\newreplacementfunctor\cofrep{Q}[parent=mcofrep]
\newreplacementfunctor\fibrep{R}[parent=mfibrep]
\newalgop\End{\int}[
	output=alg,
	singlekeys={
		{rder}{symbolputleft={
			\noexpand\mathbb{R}
			\!\!\mathchoice{\!}{}{}{}
		}},
	},
]
\newalgop\Coend{\int}[
	gradingpos=lower,
	output=alg,
	arg={\mathchoice{\!\!\!}{\!}{\!}{\!}},
	iffirstarg=true,
	singlekeys={
		{lder}{symbolputleft={
			\noexpand\mathbb{L}
			\mathchoice{\!\!}{\!}{\!}{\!}
		}},
	},
]
\tikzset{
  % style to apply some styles to each segment of a path
  on each segment/.style={
    decorate,
    decoration={
      show path construction,
      moveto code={},
      lineto code={
        \path [#1]
        (\tikzinputsegmentfirst) -- (\tikzinputsegmentlast);
      },
      curveto code={
        \path [#1] (\tikzinputsegmentfirst)
        .. controls
        (\tikzinputsegmentsupporta) and (\tikzinputsegmentsupportb)
        ..
        (\tikzinputsegmentlast);
      },
      closepath code={
        \path [#1]
        (\tikzinputsegmentfirst) -- (\tikzinputsegmentlast);
      },
    },
  },
  % style to add an arrow in the middle of a path
  mid arrow/.style={postaction={decorate,decoration={
        markings,
        mark=at position .5 with {\arrow[#1]{stealth}}
      }}},
}
\begin{document}

\maketitle

\begingroup

\begin{abstract}
	\noindent We prove and explain several classical
	formulae for homotopy (co)limits
	in general (combinatorial) model categories
	which are not necessarily simplicially enriched.
	Importantly, we prove versions of the Bousfield--Kan formula and the fat totalization formula in this complete generality. We finish with a proof that homotopy-final functors
	preserve homotopy limits, again in complete generality.
	
	\medskip
	
	\scriptsize\noindent
	Keywords:
	homotopy limit, category theory, Bousfield--Kan formula,
	\\model categories, simplicial sets, derived functors
	
	\medskip
	
	\noindent MSC:
	18G55,
	18D99,
	55U35,
	18G30
\end{abstract}

\let\section=\chapter

\numberwithin{equation}{chapter}

\noindent If \( \catC \)~is a model category and \( \catGamma \)~a category,
we denote by \( \catC[diag=\catGamma] = \catfun{ \catGamma , \catC } \)
the category of functors~\( \catGamma\to\catC \), which we shall also refer to as “diagrams” of shape~\( \catGamma \).
It is natural to call a map of diagrams~\( \malpha \colon \mF \to \mG \)
in~\( \catC[diag=\catGamma] \) a \emph{weak equivalence}
if \( \malpha[\vgamma,smash] \colon \mF{\vgamma} \to \mG{\vgamma} \)
is a weak equivalence in~\( \catC \) for all objects~\( \vgamma \in \catGamma \).
We shall refer to such weak equivalences as \textdef{componentwise} weak equivalences.
But then we immediately run into the problem that the limit functor~\( \smash{ \invlim \colon \catC[diag=\catGamma] \to \catC } \)
does not in general preserve weak equivalences. Since \( \invlim \)~is a right adjoint, this leads us into trying to \emph{derive} it. The right derived functor of~\( \invlim \) is called the \textdef{homotopy limit} and is
denoted~\( \invholim \colon \catC[diag=\catGamma]\to\catC \).
Dually, the left derived functor of~\( \dirlim \)
is called the \textdef{homotopy colimit} and is denoted~\( \dirholim \).

For many purposes, the abstract existence of homotopy limits is all you need. However, there are also many cases where a concrete, minimalistic realization of them is useful for working with abstract notions.
%A concrete, minimalistic realization of homotopy limits like the one presented in this paper can sometimes be useful for working with abstract notions.
For instance, this paper grew out of an attempt to concretize a concept from derived algebraic geometry. More specifically, we wanted to develop a homological algebra model for the dg-category of quasi-coherent sheaves on a dg-scheme which are equivariant with respect to the action of a group dg-scheme.
This question was addressed in~\textcite{explicit} where a partial result was obtained under serious restrictions (Proposition~13).
The general case was stated as a conjecture, see~Conjecture~1 in the same paper.
In the companion paper to this one, \textcite{comodules},
we cover the general case and prove that conjecture (see~Theorem~4.1.1), and the key result of homotopical nature is proved in the present note
(see~\cref{ex:fat_totalization}).

Quillen's model category machinery tells us how to derive the limit:
We must equip the diagram category~\( \catC[diag=\catGamma] \)
with a model structure with componentwise weak equivalences and in which the
limit functor~\( \invlim\colon\catC[diag=\catGamma]\to\catC \) is a right Quillen functor.
In this case, the derived functor is given by
\(
	\invholim{ \mF } = \invlim { \mfibrep ( \mF ) }
\)
for some fibrant replacement~\( \mfibrep { \mF } \)
in~\( \catC[diag=\catGamma,smash] \).
Indeed, \cref{res:limit_functor_quillen} below shows that such a model structure on~\( \catC[diag=\catGamma] \) exists e.g.\ if the model category~\( \catC \) is \emph{combinatorial}.
More precisely, we introduce
%\begin{itemize}
%	\item The \textdef{projective model structure}~\( \catC[diag=\catGamma,proj] \)
%	where weak equivalences and fibrations are calculated componentwise.
%	\item The \textdef{injective model structure}~\( \catC[diag=\catGamma,inj] \)
%	where weak equivalences and cofibrations are calculated componentwise.
%\end{itemize}
the \textit{injective model structure}~\( \catC[diag=\catGamma,inj] \)
	where weak equivalences and cofibrations are calculated componentwise.
Denoting by \( \mconst \colon \catC \to \catC[diag=\catGamma] \)
the constant functor embedding,
we clearly see that
\(
	\mconst
	\colon
	\catC
	\rightleftarrows
	\catC[diag=\catGamma,inj,smash]
	\noloc
	\invlim
\)
is a Quillen adjunction since \( \mconst \)~preserves (trivial)
cofibrations.
%Dually,~\(
%	\dirlim
%	\colon
%	\catC[diag=\catGamma,proj,smash]
%	\rightleftarrows
%	\catC
%	\noloc
%	\mconst
%\)
%is a Quillen adjunction.

The injective model structure being in general rather complicated, calculating such a replacement of a diagram in practice becomes very involved for all but the simplest shapes of the category~\( \catGamma \). Therefore, traditionally, other tools have been used.
%One of the most popular ones comes from the observation that
%one may extend the limit
%functor~\( \invlim[\catGamma] \colon \catC[diag=\catGamma] \to \catC \)
%to obtain the \emph{end}
%functor~\(
%	\End[\catGamma,smash]
%	\colon
%	\catC[diag=\catGamma[op]\times\catGamma] \to \catC
%\), which we present below,
%i.e.\ we
%have~\( \invlim[\catGamma]{\mF} = \End[\catGamma]{\mF}[smash] \)
%when \( \mF \in \catC[diag=\catGamma] \)
%is regarded as a bifunctor in~\( \catC[diag=\catGamma[op]\times\catGamma] \)
%which is constant with respect to the first variable.
%This perspective on limits has the advantage that ends, despite their greater complexity, can be much easier to derive.
One of the most popular techniques involves
\emph{adding a parameter} to the limit functor~\( \invlim[\catGamma] \)
before deriving it. The result is the \emph{end}
bifunctor~\(
	\End[\catGamma]
	\colon
	\catGamma[op]\times\catGamma
	\to
	\catC
\)
(introduced below) which is in general much easier to derive.

One of the classical accounts of this technique is
\textcite{hir}
who mainly works in the setting of \emph{simplicial model categories}, which are model categories enriched
over simplicial sets, and which furthermore are equipped with
a \emph{powering} functor
\[
	\catsset[op]\times\catC
	\longto
	\catC,
	\qquad
	\tup{ \ssetK , \vc }
	\longmapsto
	\vc[powering=\ssetK],
\]
and a \emph{copowering} functor
\[
	\catsset\times\catC
	\longto
	\catC,
	\qquad
	\tup{ \ssetK , \vc }
	\longmapsto
	\ssetK\tens\vc,
\]
satisfying
some compatibility relations with the model structure.
%He defines homotopy limits by the classical \textdef{Bousfield--Kan formula}
He then establishes the classical \textdef{Bousfield--Kan formula}
\begin{equation}\label{eq:bousfield_kan}
	\invholim { \mF }
	=
	\End [ \vgamma\in\catGamma ]{
		{\mF{\vgamma}[powering=\nerve{{\catGamma[commacat=\vgamma]}}]}
	}
	,
\end{equation}
%where the integral refers to the so-called \emph{end} construction,
%and
where we write~\( \nerve{{ \catGamma[mod=\vgamma] }} \) for the
nerve of the comma category of maps in~\( \catGamma \)
with codomain~\( \vgamma \).
Or rather, he uses this formula as his \emph{definition}
of homotopy limits (see~definition~18.1.8).
%\citeauthor{hir} then generalizes this formula to arbitrary model categories, by first showing that in this more general setting,
%one may still obtain a weaker notion of powering and copowering,
%unique up to weak equivalence in a certain sense.
%An explanation of this formula (for simplicial model categories) is due to
A proof that this formula agrees with the general
definition of homotopy limits is due to
\textcite[equation~(2)]{gambino_simplicial}
using the machinery of Quillen 2-functors.

\citeauthor{hir} then generalizes this formula to arbitrary model categories in chapter~19, definition~19.1.5.
He shows that even for non-simplicial model categories,
one can replace simplicial
powerings and copowerings by a weaker notion,
unique up to homotopy in a certain sense.
He then again takes the formula~\eqref{eq:bousfield_kan}
to be his \emph{definition} of a homotopy limit.
This paper is devoted to proving why this formula
agrees with the general definition of a homotopy limit
(similarly to what \citeauthor{gambino_simplicial}
did in the simplicial setting).
To the authors' knowledge, such a proof has not been carried out in the
literature before.

\pagebreak

\section{The end construction}

Let \( \catGamma \) and~\( \catC \) be categories, \( \catC \)~complete and cocomplete, and let~\( \mH \colon \catGamma[op] \times \catGamma \to \catC \) a bifunctor. The \textdef{end} of~\( \mH \) is an object
\[
	\End[\catGamma]{ \mH } = \End[\vgamma \in \catGamma]{ \mH { \vgamma , \vgamma } }
\]
in~\( \catC \), together with morphisms~\(
	\End[\catGamma]{ \mH }
	\to
	\mH { \vgamma , \vgamma }
\) for all~\( \vgamma \in \catGamma \), such that for any~\( \mf \colon \vgamma \to \vgamma[prime] \), the following diagram commutes:
\begin{equation*}%\label{eq:end_universal_property}
\begin{tikzcd}
	\End[\catGamma]{ \mH } \ar[r] \ar[d]
			& \mH { \vgamma , \vgamma } \ar[d, "\mH { \vgamma , \mf }"]
\\
	\mH { \vgamma[prime] , \vgamma[prime] } \ar[r, "\mH { \mf , \vgamma[prime] }"']
			& \mH { \vgamma , \vgamma[prime] }
			\invdot
\end{tikzcd}
\end{equation*}
Furthermore, \( \End[\catGamma]{ \mH} \)~is universal with this property, meaning that if \( A \)~is another object of~\( \catC \) with a collection of arrows~\( A \to \mH { \vgamma , \vgamma } \) for all~\( \vgamma \), subject to the same commutativity conditions, then these factor through a unique arrow~\( A\to \End[\catGamma]{ \mH } \):
\begin{equation*}%\label{eq:end_universal_property}
\begin{tikzcd}
	A \ar[rd,dashed] \ar[rrd, bend left] \ar[ddr, bend right]
\\[-1em]
	&[-2em]	\End[\catGamma]{ \mH } \ar[r] \ar[d]
			& \mH { \vgamma , \vgamma } \ar[d, "\mH { \vgamma , \mf }"]
\\
	&	\mH { \vgamma[prime] , \vgamma[prime] } \ar[r, "\mH { \mf , \vgamma[prime] }"']
			& \mH { \vgamma , \vgamma[prime] }
			\invdot
\end{tikzcd}
\end{equation*}
%If \( \catC \)~is complete, then clearly
Clearly, we may obtain the end by the formula
\begin{equation*}\label{eq:Equalizer_formula_for_end}
	\End[\catGamma]{ H }
	=
	\Eq[par=\Big]{
		\Prod[ \vgamma \in \catGamma ]{ \mH { \vgamma , \vgamma } }
		\rightrightarrows
		\Prod[ \mathclap{\mf \colon \vgamma \to \vgamma[prime] } ]{ \mH { \vgamma , \vgamma[prime] } }
	}.
\end{equation*}
Here, the second product runs over all morphisms~\( \mf \colon \vgamma \to \vgamma[prime] \) in~\( \catGamma \), and the two arrows are given by
\(
	\mf[push]
	\colon 
	\mH { \vgamma , \vgamma }
	\to
	\mH { \vgamma , \vgamma[prime] }
\)
resp.~\(
	\mf[pull]
	\colon
	\mH { \vgamma[prime] , \vgamma[prime] }
	\to
	\mH { \vgamma , \vgamma[prime] }
\).
There is a dual notion of a \emph{coend}, denoted instead by~\(\smash{ \Coend[\catGamma]{ \mH }} \), which we shall not spell out.

\begin{example}
	Given functors~\( \mF , \mG \colon \catGamma \to \catGamma[prime] \), we obtain a bifunctor
	\[
		\mH = \Hom[ \catGamma[prime] ]{ \mF {\slot } , \mG {\slot } }
		\colon
		\catGamma[op] \times \catGamma \to \catset,
	\]
	and the universal property shows that
	\[
		\End[\catGamma]{ \mH }
		=
		\Hom[ \catfun { \catGamma , \catGamma[prime] } ]{ \mF , \mG }
	\]
	is the set of natural transformations between \( \mF \) and~\( \mG \).
\end{example}

\begin{example}
	A diagram~\( \mF \in \catC[diag=\catGamma] \) may be regarded as a diagram in~\( \catC[diag=\catGamma[op]\times\catGamma] \) which is
	constant with respect to the first variable. In that case, it follows from the universal property of the end that
	\( \End[\catGamma]{ \mF } = \invlim[\catGamma]{ \mF } \)
	recovers the limit of the diagram.
\end{example}

\begin{proposition}\label{res:end_adjunction}
	%Suppose \( \catC \)~is cocomplete.
	The end fits as the right adjoint of the adjunction
	\[\begin{tikzcd}[sep=scriptsize]
		\Coprod[ \Hom[\catGamma] ]
		\colon
		\catC \ar[r, yshift=1.5pt]
			& \catC[diag=\catGamma[op]\times\catGamma]
			\ar[l,yshift=-1.5pt]
			\noloc
			\End[\catGamma]\invdot
	\end{tikzcd}\]
	The left adjoint takes~\( \vA \in \catC \) to the bifunctor
	\( \Coprod[ \Hom[\catGamma]{\slot ,\slot } ]{ \vA } \colon \catGamma[op]\times\catGamma\to\catC \).
\end{proposition}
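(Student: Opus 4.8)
The plan is to establish, naturally in the object~\( \vA \in \catC \) and the bifunctor~\( \mH \in \catC[diag=\catGamma[op]\times\catGamma] \), a bijection between the morphisms~\( \Coprod[ \Hom[\catGamma]{\slot,\slot} ]{ \vA } \to \mH \) in~\( \catC[diag=\catGamma[op]\times\catGamma] \) and the morphisms~\( \vA \to \End[\catGamma]{ \mH } \) in~\( \catC \), by unwinding each side against a universal property. The functoriality statements are routine and would be dispatched first: a map~\( \vA \to \vA[prime] \) in~\( \catC \) induces, variable by variable, the coproduct of copies of that map, so the assignment~\( \vA \mapsto \Coprod[ \Hom[\catGamma]{\slot,\slot} ]{ \vA } \) is a functor~\( \catC \to \catC[diag=\catGamma[op]\times\catGamma] \); and a morphism~\( \tup{ \mg , \mh } \colon \tup{ \vgamma , \vgamma[prime] } \to \tup{ \vX , \vY } \) of~\( \catGamma[op]\times\catGamma \) — that is, maps~\( \mg \colon \vX \to \vgamma \) and~\( \mh \colon \vgamma[prime] \to \vY \) in~\( \catGamma \) — acts on~\( \Coprod[ \Hom[\catGamma]{\slot,\slot} ]{ \vA } \) through the reindexing~\( \mf \mapsto \mh \mf \mg \) of hom-sets, carried identically across the summand~\( \vA \).

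Now I would unwind the left-hand side. By the universal property of the coproduct, a morphism~\( \Coprod[ \Hom[\catGamma]{\slot,\slot} ]{ \vA } \to \mH \) in~\( \catC[diag=\catGamma[op]\times\catGamma] \) is the same datum as a family of maps~\( \psi_{\mf} \colon \vA \to \mH{ \vgamma , \vgamma[prime] } \), one for each~\( \mf \colon \vgamma \to \vgamma[prime] \) in~\( \catGamma \); and, in view of the reindexing just described, the naturality squares of this morphism amount exactly to the single identity
\[
	\psi_{ \mh \mf \mg } = \mH{ \mg , \mh } \circ \psi_{\mf}
\]
for every~\( \mf \) and every composable pair~\( \mg , \mh \) in~\( \catGamma \). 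Setting the middle factor to an identity and the remaining factor to~\( \mf \), in each of the two variables in turn, gives~\( \psi_{\mf} = \mH{ \vgamma , \mf } \circ \psi_{\vgamma} = \mH{ \mf , \vgamma[prime] } \circ \psi_{\vgamma[prime]} \), where~\( \psi_{\vgamma} \) denotes the component of~\( \psi \) at the identity of~\( \vgamma \). Hence~\( \psi \) is completely determined by the diagonal family~\( \bigl( \psi_{\vgamma} \colon \vA \to \mH{ \vgamma , \vgamma } \bigr)_{\vgamma} \), and the two expressions just obtained for~\( \psi_{\mf} \) coincide precisely when that family satisfies the wedge condition~\( \mH{ \vgamma , \mf } \circ \psi_{\vgamma} = \mH{ \mf , \vgamma[prime] } \circ \psi_{\vgamma[prime]} \) for all~\( \mf \colon \vgamma \to \vgamma[prime] \). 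Conversely, from any such wedge the recipe~\( \psi_{\mf} = \mH{ \vgamma , \mf } \circ \psi_{\vgamma} \) produces a family obeying the displayed identity — a brief diagram chase using bifunctoriality of~\( \mH \) and the wedge condition applied to~\( \mg \) — and the two passages are visibly mutually inverse. So morphisms~\( \Coprod[ \Hom[\catGamma]{\slot,\slot} ]{ \vA } \to \mH \) correspond bijectively, naturally in~\( \vA \) and~\( \mH \), to wedges from~\( \vA \) to~\( \mH \).

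On the other side, the universal property of the end, as spelled out at the beginning of this section, says that a morphism~\( \vA \to \End[\catGamma]{ \mH } \) is precisely a collection of maps~\( \vA \to \mH{ \vgamma , \vgamma } \) making the defining square of the end commute for every~\( \mf \colon \vgamma \to \vgamma[prime] \) — i.e.\ exactly a wedge from~\( \vA \) to~\( \mH \). Composing the two correspondences yields the adjunction bijection, and its naturality in~\( \vA \) and~\( \mH \) is immediate since every arrow in sight was built by pre- and post-composition. I expect the one step requiring real care to be the argument sketched in the middle paragraph, namely tracing the variance through~\( \catGamma[op]\times\catGamma \) closely enough to confirm that the naturality condition on the~\( \psi_{\mf} \) is \emph{equivalent} to the wedge condition defining the end — neither stronger nor weaker. (A more computational alternative would apply~\( \Hom[\catC]{ \vA , - } \) to the equalizer presentation of~\( \End[\catGamma]{ \mH } \) recalled above — that functor carries the equalizer and the two products there to the corresponding limit in~\( \catset \) — and then match the resulting set with the set of wedges; I would mention this route but carry out the universal-property argument in full.)
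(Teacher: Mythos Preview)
Your argument is correct and is precisely the unwinding that the paper has in mind: the paper's own proof is the single sentence ``Clear from the definition,'' and what you have written is exactly the verification---via the universal properties of the coproduct and of the end---that makes this clear. Your careful check that the naturality condition on the~\( \psi_{\mf} \) is equivalent to the wedge condition is the only nontrivial point, and you have handled it correctly.
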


\begin{proof}
	Clear from the definition.
\end{proof}

This is equivalent to the statement that we have an adjunction
\[
	\catset[diag=\catGamma[op]\times\catGamma,par=\big]{
		\Hom[\catGamma] ,
		\catC { \vA , \mF }
	}
	\cong
	\catC[diag=\catGamma[op]\times\catGamma,par=\big]{
		\vA , \textstyle \End[\catGamma]{ \mF }
	}
\]
for \( \vA \in \catC \). This says that the end is the
\emph{weighted limit}~\(
	\catC[diag=\catGamma[op]\times\catGamma]
	\to
	\catC
\)
%\( \End[\catGamma,smash] = \invlim[weight=\Hom[\catGamma],\catGamma[op]\times\catGamma,smash] \)
with weight~\( \Hom[\catGamma] \).
The dual statement for \emph{coends} is that the
coend functor
\[\textstyle
	\Coend[\catGamma]
	\colon
	\catC[diag=\catGamma[op]\times\catGamma]
	\longto
	\catC
\]
is left adjoint to~\( \Prod[\Hom[\catGamma]] \).

\section{The projective and injective model structures}

If \( \catC \)~is a model category and \( \catGamma \)~any category, there is no completely general way to turn the functor category
\(
	\catC[diag=\catGamma]
	= \catfun{ \catGamma , \catC }
\)
into a model category. The na\"ive approach, calculating weak equivalences, cofibations, and fibrations componentwise, will not in general yield a model structure. It is natural to demand that at least the weak equivalences must be calculated componentwise for any model structure to be satisfactory. In general, however, at least one of the other two classes will in return become more complicated. The two most natural model structures one can hope for (which may or may not exist) are
\begin{itemize}
	\item The \textdef{projective model structure}~%
	%\( \tup { {\catC[diag={\catGamma}]} , \modelstructureproj } \)
	\( \catC[diag=\catGamma,proj] \)
	where weak equivalences and fibrations are calculated componentwise.
	\item The \textdef{injective model structure}~%
	%\( \tup { {\catC[diag={\catGamma}]} , \modelstructureinj } \)
	\( \catC[diag=\catGamma,inj] \)
	where weak equivalences and cofibrations are calculated componentwise.
\end{itemize}
Existence of these model structures depends heavily on the structure of the target category~\( \catC \) (see \cref{res:existence_of_proj_inj_model_structures} below).
We shall also use the attributes \textquote{projective(ly)} and \textquote{injective(ly)} when referring to these model structures, so e.g.~\textquote{projectively cofibrant} means cofibrant in the projective model structure.

\begin{propositionbreak}[Proposition {\parencite[Proposition~A.2.8.7]{htt}}]%
	\label{res:Kan_extensions_Quillen_adjunctions}
	If \( \catC \)~is a model category and \( \mf \colon \catGamma\to\catGamma[prime] \) a functor,
	%then the pullback
	denote by~\( \mf[*res] \) the restriction functor
	\(\smash{
		\catC[diag={\catGamma[prime]}]
		\to
		\catC[diag={\catGamma}]
	}\).
	Then \( \mf[*res] \)~fits as the right and left adjoint of Quillen adjunctions
	\[
	\begin{tikzcd}[sep=scriptsize]
		\mf[!kan]
		\colon
		\catC[diag=\catGamma,proj]
		\ar[r,yshift=1.5pt]
		&
		\ar[l,yshift=-1.5pt]
		\catC[diag=\catGamma[prime],proj]
		\noloc
		\mf[*res]
	\end{tikzcd}
		\qquad\text{resp.}\qquad
	\begin{tikzcd}[sep=scriptsize]
		\mf[*res]
		\colon
		\catC[diag=\catGamma[prime],inj]
		\ar[r,yshift=1.5pt]
		&
		\ar[l,yshift=-1.5pt]
		\catC[diag=\catGamma,inj]
		\noloc
		\mf[*kan]
	\end{tikzcd}
	\]
	whenever the model structures in question exist.
\end{propositionbreak}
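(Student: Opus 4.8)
The plan is to recognize both adjunctions as the Kan‑extension adjunctions along~\( \mf \), and then to read off the Quillen property directly from the componentwise description of the relevant classes of maps. First, since \( \catC \)~is a model category it is complete and cocomplete, so the restriction functor \( \mf[*res]\colon\catC[diag={\catGamma[prime]}]\to\catC[diag=\catGamma] \) admits a left adjoint \( \mf[!kan] \) (left Kan extension along~\( \mf \)) and a right adjoint \( \mf[*kan] \) (right Kan extension along~\( \mf \)); these exist because they are computed pointwise by colimits, resp.\ limits, over the relevant comma categories, which we take to be small. Thus on the level of underlying categories we already have the adjoint pairs \( (\mf[!kan],\mf[*res]) \) and \( (\mf[*res],\mf[*kan]) \), and it remains only — assuming the model structures in question exist — to check that each pair is a Quillen pair.

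For the projective case, recall that by definition a map in \( \catC[diag={\catGamma[prime]},proj] \) is a fibration (resp.\ a trivial fibration) if and only if each of its components is a fibration (resp.\ trivial fibration) in~\( \catC \), weak equivalences being componentwise in either structure. Now if \( \malpha \)~is such a map, then for every \( \vgamma\in\catGamma \) the \( \vgamma \)-component of \( \mf[*res]\malpha \) is precisely the component of \( \malpha \) at \( \mf{\vgamma} \), hence again a (trivial) fibration in~\( \catC \). Therefore \( \mf[*res] \)~preserves fibrations and trivial fibrations, so \( (\mf[!kan],\mf[*res]) \)~is a Quillen adjunction with \( \mf[*res] \)~right Quillen (equivalently, \( \mf[!kan] \)~preserves cofibrations and trivial cofibrations). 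This is the first assertion.

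For the injective case the argument is dual: a map in \( \catC[diag={\catGamma[prime]},inj] \) is a cofibration (resp.\ trivial cofibration) exactly when it is one componentwise, and the same reindexing observation shows that \( \mf[*res] \)~preserves cofibrations and trivial cofibrations. Since \( \mf[*res] \)~is now the \emph{left} adjoint of \( (\mf[*res],\mf[*kan]) \), this makes that pair a Quillen adjunction with \( \mf[*res] \)~left Quillen, which is the second assertion.

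The proof is purely formal and there is no genuine obstacle; the only points worth flagging are that we do \emph{not} establish existence of the projective or injective model structures here — the statement is conditional on it, cf.\ \cref{res:existence_of_proj_inj_model_structures} — and that the pointwise Kan‑extension formulas presuppose \( \catGamma \)~small, which is among the standing hypotheses. Conceptually, everything reduces to the fact that \( \mf[*res] \)~is ``precomposition with~\( \mf \)'', and so automatically respects any class of maps that is defined componentwise.
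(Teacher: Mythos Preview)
Your proof is correct and follows exactly the same approach as the paper's: both argue that the adjunctions exist (by completeness/cocompleteness of~\( \catC \)) and then observe that \( \mf[*res] \) preserves componentwise (trivial) fibrations and componentwise (trivial) cofibrations, which is immediate from the definitions. The paper's version is simply more terse.
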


The adjoints \( \mf[!kan] \) and~\( \mf[*kan] \) are the usual \textdef[left Kan extension]{left} and \textdef[right Kan extension]{right Kan extensions} along~\( \mf \), which are given by
%weighted
limits
\begin{equation}\label{eq:kan_extensions}%\textstyle
	\mf[!kan]{ \mF }{ \vgamma[prime] }
	=
	\dirlim[\mf{\vgamma}\to\vgamma[prime]]{\mF{\vgamma}}
	\qquad\text{and}\qquad
	\mf[*kan]{ \mF }{ \vgamma[prime] }
	=
	\invlim[\vgamma[prime]\to\mf{\vgamma}]{\mF{\vgamma}}
	.
\end{equation}
These limits are taken over the categories of maps~\( \mf{\vgamma}\to\vgamma[prime] \) (resp.~\( \vgamma[prime]\to\mf{\vgamma} \))
in~\( \catGamma[prime] \)
for varying~\( \vgamma \in \catGamma \).

\begin{proof}
	Since the adjunctions in question exist, their being Quillen follows from the observation that \( \mf[*res] \)~clearly preserves (trivial) projective fibrations and (trivial) injective cofibrations.
\end{proof}

\newvar\vc{c}

\begin{corollary}
	Assume in the following that the relevant model structures exist.
	\begin{corollarylist}
		\item\label{res:simple_cofibrations} If \( \mvarphi \colon \vc \to \vc[prime] \) is a (trivial) cofibration in~\( \catC \) and~\( \vgamma[0] \in \catGamma \) is an object, then the coproduct map
		\(
			\Coprod[\catGamma{ \vgamma[0] ,\slot }]{ \mvarphi }
			\colon
			\Coprod[\catGamma{ \vgamma[0] ,\slot }]{ \vc }
			\to
			\Coprod[\catGamma{ \vgamma[0] ,\slot }]{ \vc[prime] }
		\)
		is a (trivial) cofibration in~%
		%\( \tup { { \catC[diag=\catGamma,smash] } , \modelstructureproj } \).
		\( \catC[diag=\catGamma,proj,smash] \).
		We shall refer to such (trivial) cofibrations as \textdef[simple (trivial) projective cofibration]{simple projective cofibrations}\index{cofibration!projective!simple}.
		\item\label{res:preserve_simple_cofibrations} If \( \mf \colon \catGamma \to \catGamma[prime] \)~is a functor, then
		\(
			\mf[!kan]
			\colon
			%\tup { {\catC[diag={\catGamma}]} , \modelstructureproj }
			\catC[diag={\catGamma},proj]
			\to
			%\tup { {\catC[diag={\catGamma[prime]}]}, \modelstructureproj }
			\catC[diag={\catGamma[prime]},proj]
		\)
		preserves simple (trivial) projective cofibrations,
		taking \( \Coprod[\catGamma{ \vgamma[0] ,\slot }]{ \mvarphi } \)
		to~%
		\( \Coprod[ { \catGamma[prime]{ \mf { \vgamma[0] } ,\slot } } ]{ \mvarphi } \).
		\item If \( \mpsi \colon \vc \to \vc[prime] \)
		is a (trivial) fibration in~\( \catC \) and \( \vgamma[0] \in \catGamma \)~is an object,
		then the product map
		\(
			\Prod[ \catGamma {\slot , \vgamma[0] } ]{ \mpsi }
			\colon
			\Prod[ \catGamma {\slot , \vgamma[0] } ]{ \vc }
			\to
			\Prod[ \catGamma {\slot , \vgamma[0] } ]{ \vc[prime] }
		\)
		is a (trivial) fibration in~%
		\(
			%\tup { { \catC[diag=\catGamma] } , \modelstructureinj }
			\catC[diag=\catGamma,inj,smash]
		\).
		We shall refer to such (trivial) fibrations as \textdef[simple (trivial) injective fibration]{simple injective fibrations}\index{fibration!injective!simple}.
		\item If \( \mf \colon \catGamma \to \catGamma[prime] \) is a functor,
		then
		\(
			\mf[*kan]
			\colon
			%\tup { {\catC[diag={\catGamma}]} , \modelstructureinj }
			\catC[diag={\catGamma},inj]
			\to
			%\tup { {\catC[diag={\catGamma[prime]}]}, \modelstructureinj }
			\catC[diag={\catGamma[prime]},inj]
		\)
		preserves simple (trivial) injective fibrations,
		taking \( \Prod[ \catGamma {\slot , \vgamma[0] } ]{ \mpsi } \)
		to~\( \Prod[ { \Gamma'( {\slot , \mf { \vgamma[0] } } ) } ]{ \mpsi } \).
	\end{corollarylist}
\end{corollary}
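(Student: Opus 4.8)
The plan is to realise each of the four functors $\Coprod[ \catGamma{ \vgamma[0] , \slot } ]{\slot}$, $\Prod[ \catGamma{ \slot , \vgamma[0] } ]{\slot}$, $\mf[!kan]$ and~$\mf[*kan]$ occurring in the statement as Kan extensions along the inclusion of a single object, and then to invoke \cref{res:Kan_extensions_Quillen_adjunctions}; the two statements about~$\mf[!kan]$ and~$\mf[*kan]$ will then come out by composing two such Kan extensions.

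First I would fix an object $\vgamma[0] \in \catGamma$ and let $\miota \colon \mathbf{1} \to \catGamma$ be the functor out of the terminal category~$\mathbf{1}$ with $\miota{\ast} = \vgamma[0]$. Since a functor $\mathbf{1} \to \catC$ is the same datum as an object of~$\catC$, and a natural transformation between two such is the same datum as a morphism of~$\catC$, we have $\catC[diag={\mathbf{1}}] \cong \catC$, and under this identification componentwise weak equivalences, fibrations and cofibrations are just the weak equivalences, fibrations and cofibrations of~$\catC$; hence both $\catC[diag={\mathbf{1}},proj]$ and $\catC[diag={\mathbf{1}},inj]$ exist and coincide with the model structure on~$\catC$. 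Next I would unwind the pointwise formulas~\eqref{eq:kan_extensions} for the Kan extensions along~$\miota$: their indexing categories are the comma categories whose objects are the maps $\miota{\ast}\to\vgamma[prime]$, respectively $\vgamma[prime]\to\miota{\ast}$, in~$\catGamma$, a morphism between two such being a morphism of~$\mathbf{1}$ compatible with the structure maps. As the only morphism of~$\mathbf{1}$ is the identity, these comma categories are discrete, equal to the Hom-sets $\catGamma{ \vgamma[0] , \vgamma[prime] }$ and $\catGamma{ \vgamma[prime] , \vgamma[0] }$, so
\[
	\miota[!kan]{ \vc }{ \vgamma[prime] }
	=
	\dirlim[\miota{\ast}\to\vgamma[prime]]{ \vc }
	=
	\coprod_{ \catGamma{ \vgamma[0] , \vgamma[prime] } } \vc
	,
	\qquad
	\miota[*kan]{ \vc }{ \vgamma[prime] }
	=
	\invlim[\vgamma[prime]\to\miota{\ast}]{ \vc }
	=
	\prod_{ \catGamma{ \vgamma[prime] , \vgamma[0] } } \vc
	,
\]
naturally in~$\vgamma[prime]$; in other words $\miota[!kan] = \Coprod[ \catGamma{ \vgamma[0] , \slot } ]{\slot}$ and $\miota[*kan] = \Prod[ \catGamma{ \slot , \vgamma[0] } ]{\slot}$ as functors $\catC \to \catC[diag=\catGamma]$. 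Then \cref{res:Kan_extensions_Quillen_adjunctions} applied to~$\miota$ says that $\Coprod[ \catGamma{ \vgamma[0] , \slot } ]{\slot}$ is a left Quillen functor $\catC \to \catC[diag=\catGamma,proj]$ and $\Prod[ \catGamma{ \slot , \vgamma[0] } ]{\slot}$ a right Quillen functor $\catC \to \catC[diag=\catGamma,inj]$; as left Quillen functors preserve (trivial) cofibrations and right Quillen functors preserve (trivial) fibrations, this is precisely \cref{res:simple_cofibrations} and the third statement.

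For \cref{res:preserve_simple_cofibrations} and the fourth statement I would, given $\mf \colon \catGamma \to \catGamma[prime]$, use that restriction of diagrams is strictly functorial: the restriction along~$\mf$ followed by the restriction along~$\miota$ equals the restriction along~$\mf\circ\miota \colon \mathbf{1} \to \catGamma[prime]$, so passing to left, respectively right, adjoints gives canonical natural isomorphisms $\mf[!kan]\circ\miota[!kan] \cong (\mf\circ\miota)_{!}$ and $\mf[*kan]\circ\miota[*kan] \cong (\mf\circ\miota)_{*}$. Since $\mf\circ\miota$ is the functor selecting~$\mf{ \vgamma[0] }$, the previous paragraph identifies $(\mf\circ\miota)_{!} = \Coprod[ { \catGamma[prime]{ \mf { \vgamma[0] } ,\slot } } ]{\slot}$ and $(\mf\circ\miota)_{*} = \Prod[ { \catGamma[prime]{ \slot , \mf { \vgamma[0] } } } ]{\slot}$. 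Hence $\mf[!kan]$ takes $\Coprod[ \catGamma{ \vgamma[0] , \slot } ]{ \mvarphi }$ to a map isomorphic to $\Coprod[ { \catGamma[prime]{ \mf { \vgamma[0] } ,\slot } } ]{ \mvarphi }$, which by \cref{res:simple_cofibrations} over~$\catGamma[prime]$ is a simple (trivial) projective cofibration, and dually $\mf[*kan]$ takes $\Prod[ \catGamma{ \slot , \vgamma[0] } ]{ \mpsi }$ to a map isomorphic to $\Prod[ { \catGamma[prime]{ \slot , \mf { \vgamma[0] } } } ]{ \mpsi }$, a simple (trivial) injective fibration.

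I do not expect a genuine obstacle here: the real content is \cref{res:Kan_extensions_Quillen_adjunctions}, and the rest is the routine identification of the pointwise formulas~\eqref{eq:kan_extensions} with coproduct and product functors, together with the standard pseudofunctoriality of Kan extensions. The only delicate point is the ``up to isomorphism'' in \cref{res:preserve_simple_cofibrations} and its dual, which is harmless because a map isomorphic to a (trivial) cofibration or fibration is one; if one prefers, one can avoid pseudofunctoriality altogether by observing that $\mf[!kan]$ is left Quillen by \cref{res:Kan_extensions_Quillen_adjunctions}, hence preserves all (trivial) projective cofibrations, and then reading off its value on $\Coprod[ \catGamma{ \vgamma[0] , \slot } ]{ \mvarphi }$ directly from the pointwise formula for~$\mf[!kan]$.
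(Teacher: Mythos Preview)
Your proposal is correct and follows essentially the same approach as the paper: both realize the coproduct and product functors as left and right Kan extensions along the inclusion of a single object, apply \cref{res:Kan_extensions_Quillen_adjunctions}, and then obtain parts~(ii) and~(iv) from the compatibility of Kan extensions with composition. The only cosmetic difference is that the paper phrases the one-object source as ``the full subcategory with $\gamma_0$ as the only object'' rather than the terminal category~$\mathbf{1}$; your choice is arguably cleaner, since it makes the identification $\iota_{!}(c)(\gamma') \cong \coprod_{\Gamma(\gamma_0,\gamma')} c$ immediate regardless of the endomorphisms of~$\gamma_0$.
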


\begin{proof}
	%The statement~\localref{res:simple_cofibrations}
	%follows by applying~\cref{res:Kan_extensions_Quillen_adjunctions}
	%to the embedding~\( \vgamma[0] \into \catGamma \) of the full subcategory with~\( \vgamma[0] \) as the only object and using the colimit formula for Kan extensions.
	Applying~\cref{res:Kan_extensions_Quillen_adjunctions} to the embedding%
	~\( \miota \colon \vgamma[0] \into \catGamma \) of the full subcategory with~\( \vgamma[0] \) as the only object,
	we get that \( \miota[!kan]{ \mvarphi } \)~is a (trivial) cofibration.
	Now
	\(
		\miota[!kan]{ \mvarphi }
		=
		\Coprod[ { \catGamma { \vgamma[0],\slot } } , smash ]{ \mvarphi }
	\)
	by the above colimit formula for left Kan extension.
	The statement~\localref{res:preserve_simple_cofibrations}
	follows %because Kan extensions (being adjoints to restriction) respect compositions
	by applying Kan extensions to the diagram
	\[\begin{tikzcd}[sep=small]
		\vgamma[0] \ar[r,hook] \ar[d]	& \catGamma \ar[d,"\mf"]
	\\
		\mf { \vgamma[0] } \ar[r,hook]	& \catGamma[prime]
	\end{tikzcd}\]
	and using that Kan extensions, being adjoints to restriction, respect compositions. The other statements are dual.
\end{proof}

\begin{corollary}\label{res:limit_functor_quillen}
	Denote by \( \mconst \colon \catC \to \catC[diag=\catGamma] \) the functor taking~\( \vc \in \catC \) to the constant diagram at~\( \vc \).
	\begin{corollarylist}
		\item\label{res:limit_functor_quillen_dirlim} If
		%\( \tup { { \catC[diag=\catGamma] } , \modelstructureproj } \)
		\( \catC[diag=\catGamma,proj] \)
		exists, then
		\(
			\dirlim
			\colon
			%\tup { { \catC[diag=\catGamma] } , \modelstructureproj }
			\catC[diag=\catGamma,proj]
			\rightleftarrows
			\catC
			\noloc
			\mconst
		\)
		is a Quillen adjunction.
		\item\label{res:limit_functor_quillen_invlim} If
		%\( \tup { { \catC[diag=\catGamma] } , \modelstructureinj } \)
		\( \catC[diag=\catGamma,inj] \)
		exists, then
		\(
			\mconst
			\colon
			\catC
			\rightleftarrows
			%\tup { { \catC[diag=\catGamma] } , \modelstructureinj }
			\catC[diag=\catGamma,inj]
			\noloc
			\invlim
		\)
		is a Quillen adjunction.
	\end{corollarylist}
\end{corollary}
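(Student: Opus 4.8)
The plan is to reduce everything to the almost tautological behaviour of the constant-diagram functor, using the explicit componentwise descriptions of the two model structures. The crucial observation is that $\mconst$ sends a morphism $\mpsi \colon \vc \to \vc[prime]$ in~$\catC$ to the map of diagrams whose value at every object of~$\catGamma$ is~$\mpsi$, with identity transition squares; hence $\mconst$ carries (trivial) fibrations in~$\catC$ to componentwise (trivial) fibrations in~$\catC[diag=\catGamma]$, and (trivial) cofibrations to componentwise (trivial) cofibrations.

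For part~\localref{res:limit_functor_quillen_dirlim} I would argue as follows. In the projective model structure the (trivial) fibrations are by definition exactly the componentwise ones, so the observation above shows that $\mconst$ preserves (trivial) fibrations. Since $\mconst$ is the \emph{right} adjoint of the adjunction $\dirlim \dashv \mconst$, this is precisely what is needed for the adjunction to be Quillen. Part~\localref{res:limit_functor_quillen_invlim} is dual: in the injective model structure the (trivial) cofibrations are by definition the componentwise ones, so $\mconst$ preserves them, and since $\mconst$ is now the \emph{left} adjoint of the adjunction $\mconst \dashv \invlim$, the adjunction is again Quillen.

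As an alternative route, both claims follow from \cref{res:Kan_extensions_Quillen_adjunctions} applied to the unique functor $\mf \colon \catGamma \to \ast$ to the terminal category. Identifying $\catC[diag=\ast] = \catfun{\ast, \catC}$ with~$\catC$ — an identification under which the projective and injective model structures on $\catC[diag=\ast]$ both reduce to the given model structure on~$\catC$, so that the existence hypotheses are automatic — the restriction functor $\mf[*res]$ becomes $\mconst$, while the Kan extension formulas~\eqref{eq:kan_extensions} identify $\mf[!kan]$ with $\dirlim$ and $\mf[*kan]$ with $\invlim$, because each comma category occurring in those formulas is simply~$\catGamma$. \Cref{res:Kan_extensions_Quillen_adjunctions} then delivers the two Quillen adjunctions verbatim.

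I do not expect any genuine obstacle; the statement is essentially a bookkeeping exercise. The only point worth stating carefully is the choice of \emph{which} adjoint in each pair to check: for $\dirlim \dashv \mconst$ one verifies that the right adjoint preserves (trivial) fibrations, whereas for $\mconst \dashv \invlim$ one verifies that the left adjoint preserves (trivial) cofibrations — in both cases the functor that is easy to control is $\mconst$, and since the two conditions defining a Quillen adjunction are equivalent, either one suffices.
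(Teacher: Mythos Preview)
Your proposal is correct, and your alternative route---applying \cref{res:Kan_extensions_Quillen_adjunctions} to the unique functor $\catGamma \to \ast$---is exactly the paper's one-line proof. Your first, direct argument (checking by hand that $\mconst$ preserves componentwise (trivial) fibrations and cofibrations) is also perfectly valid and arguably more self-contained, but the paper opts for the shorter deduction from the Kan-extension proposition.
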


\begin{proof}
	Apply \cref{res:Kan_extensions_Quillen_adjunctions} to the functor \( \catGamma\to * \).
\end{proof}

\begin{propositionbreak}[Proposition {\parencite[Proposition~A.2.8.2]{htt}}]%
	\label{res:existence_of_proj_inj_model_structures}
	If \( \catC \)~is a combinatorial model category, both the projective and injective model structures on~\( \catC[diag=\catGamma,smash] \) exist and are combinatorial.
	
	Given a generating set of (trivial) cofibrations in~\( \catC \), the corresponding \emph{simple} (trivial) cofibrations in~\( \catC[diag=\catGamma,proj,smash] \), for all choices
	of~\( \vgamma[0] \in \catGamma \), form a generating set of cofibrations.
\end{propositionbreak}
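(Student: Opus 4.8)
The plan is to obtain each of the two model structures from a recognition theorem: Kan's recognition theorem for cofibrantly generated model categories handles the projective structure, and Jeff Smith's recognition theorem for combinatorial model categories handles the injective one. Two elementary observations are used throughout. First, limits, colimits, and (by definition) weak equivalences in~\( \catC[diag=\catGamma] \) are all computed componentwise; in particular componentwise weak equivalences satisfy the two-out-of-three property and are closed under retracts, these being componentwise statements. Second, \( \catC[diag=\catGamma] \)~is again locally presentable --- because \( \catC \)~is (being combinatorial) and \( \catGamma \)~is small --- so every object of~\( \catC[diag=\catGamma] \) is small, and the smallness hypotheses in the recognition theorems are automatic.

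For the projective structure, fix a generating set~\( I \) of cofibrations and a generating set~\( J \) of trivial cofibrations for~\( \catC \). For each object~\( \gamma_0 \in \catGamma \), the evaluation functor \( \operatorname{ev}_{\gamma_0} \colon \catC[diag=\catGamma] \to \catC \) has a left adjoint~\( F_{\gamma_0} \), the free diagram at~\( \gamma_0 \) --- the left Kan extension of~\cref{res:Kan_extensions_Quillen_adjunctions} along the inclusion~\( \{ \gamma_0 \} \hookrightarrow \catGamma \) of the one-object subcategory, which by the colimit formula~\eqref{eq:kan_extensions} sends \( c \in \catC \) to the diagram \( \gamma \mapsto \coprod_{\catGamma(\gamma_0,\gamma)} c \). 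Put \( I_{\catGamma} = \{\, F_{\gamma_0}(i) : i \in I,\ \gamma_0 \in \catGamma \,\} \) and \( J_{\catGamma} = \{\, F_{\gamma_0}(j) : j \in J,\ \gamma_0 \in \catGamma \,\} \) --- which consist precisely of the simple (trivial) projective cofibrations of~\cref{res:simple_cofibrations}. First I would read off the right-lifting classes from the adjunctions \( F_{\gamma_0} \dashv \operatorname{ev}_{\gamma_0} \): a map of diagrams has the right lifting property against~\( I_{\catGamma} \) (respectively~\( J_{\catGamma} \)) if and only if each of its evaluations lies in~\( I \)-inj (respectively~\( J \)-inj) in~\( \catC \), that is, if and only if the map is a componentwise trivial fibration (respectively componentwise fibration). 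Next I would check that every relative \( J_{\catGamma} \)-cell complex is a componentwise weak equivalence: since \( \operatorname{ev}_{\gamma_0} \) preserves colimits and carries the free diagram on a \( j \in J \) to a coproduct of copies of~\( j \), evaluating such a cell complex at~\( \gamma_0 \) exhibits it as a transfinite composite of pushouts of coproducts of trivial cofibrations in~\( \catC \), hence as a trivial cofibration. The remaining hypotheses of Kan's recognition theorem --- two-out-of-three and retracts for componentwise weak equivalences, smallness, and the comparison of the generated classes --- are routine, and the theorem produces the projective model structure, with componentwise weak equivalences and fibrations and with \( I_{\catGamma},\, J_{\catGamma} \) as generating sets of cofibrations and of trivial cofibrations; this is the last assertion of the proposition, and the structure is combinatorial because its underlying category is locally presentable.

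For the injective structure I would invoke Smith's theorem, which, for the locally presentable category~\( \catC[diag=\catGamma] \), produces a combinatorial model structure once one supplies: a class~\( W \) of weak equivalences, here the componentwise weak equivalences, which satisfies two-out-of-three and forms an accessible, accessibly embedded full subcategory of the category of morphisms of~\( \catC[diag=\catGamma] \); a \emph{set}~\( I_{\catGamma}^{\mathrm{inj}} \) whose generated weakly saturated class is the class of componentwise cofibrations; the inclusion \( I_{\catGamma}^{\mathrm{inj}}\text{-inj} \subseteq W \); and closure of \( W \cap \bigl( I_{\catGamma}^{\mathrm{inj}}\text{-cof} \bigr) \) under pushouts and transfinite composition. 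Two-out-of-three for~\( W \) is componentwise; accessibility of~\( W \) reduces, via the colimit-preserving (hence accessible) evaluation functors~\( \operatorname{ev}_\gamma \), which jointly detect membership in~\( W \), to the theorem that the weak equivalences of the combinatorial model category~\( \catC \) form an accessible subcategory of its morphism category (one then uses the stability of accessible categories under preimages along accessible functors and under small intersections). The inclusion \( I_{\catGamma}^{\mathrm{inj}}\text{-inj} \subseteq W \) holds because the free diagrams \( F_{\gamma_0}(i) \) on generating cofibrations~\( i \) of~\( \catC \) are themselves componentwise cofibrations, so a map with the right lifting property against all componentwise cofibrations --- equivalently, against \( I_{\catGamma}^{\mathrm{inj}} \) --- becomes, after evaluation at each~\( \gamma_0 \), a map with the right lifting property against all of~\( I \), hence a componentwise trivial fibration, in particular a componentwise weak equivalence. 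Finally, \( W \cap \bigl( I_{\catGamma}^{\mathrm{inj}}\text{-cof} \bigr) \) is the class of componentwise trivial cofibrations, closed under pushouts and transfinite composition because these colimits are componentwise. Granting the set~\( I_{\catGamma}^{\mathrm{inj}} \), Smith's theorem then yields the injective model structure --- with componentwise weak equivalences and componentwise cofibrations by construction --- and it is combinatorial by design.

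The one genuinely non-formal point, which I expect to be the main obstacle, is the production of the set~\( I_{\catGamma}^{\mathrm{inj}} \) generating the componentwise cofibrations. In contrast with the projective case there is no free-diagram/evaluation adjunction reducing this to~\( \catC \) one object at a time. Instead I would argue that the componentwise cofibrations form a weakly saturated class --- closed under pushout, transfinite composition, and retract, since the cofibrations of~\( \catC \) are and colimits in~\( \catC[diag=\catGamma] \) are componentwise --- which is moreover \emph{accessible}, again by reduction through the evaluation functors to the fact that the cofibrations of the cofibrantly generated category~\( \catC \) form an accessible class; one then invokes the general principle that an accessible weakly saturated class of morphisms in a locally presentable category is the weak saturation of a small subset. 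Alternatively, one can do the bookkeeping explicitly: fixing a regular cardinal~\( \kappa \) for which \( \catC \)~is locally \( \kappa \)-presentable with \( \kappa \)-presentable generating cofibrations, one lets \( I_{\catGamma}^{\mathrm{inj}} \) consist of the componentwise cofibrations between \( \kappa \)-presentable diagrams and shows, by filtering an arbitrary componentwise cofibration by its \( \kappa \)-presentable subobjects, that this set generates. Making the cardinal estimates line up --- controlling simultaneously the presentability of the diagrams, the behaviour of the generating cofibrations of~\( \catC \) at~\( \kappa \), and the reconstruction of a general componentwise cofibration from small ones --- is where the real work lies; everything else is a formal consequence of the two recognition theorems together with the componentwise computation of limits, colimits, and weak equivalences.
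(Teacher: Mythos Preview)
Your proposal is correct and follows essentially the same route as the paper, which gives only a brief sketch deferring the details to Lurie's \emph{Higher Topos Theory}. For the projective structure the paper says to check that simple (trivial) cofibrations have the left lifting property against componentwise (trivial) fibrations and that they form a generating set; you carry this out via the adjunctions \( F_{\gamma_0} \dashv \operatorname{ev}_{\gamma_0} \) and Kan's recognition theorem, which is exactly the standard argument. For the injective structure the paper merely notes that it ``requires more work and has a less explicit set of generating cofibrations''; your use of Smith's theorem, together with the accessibility argument for producing a generating set of componentwise cofibrations, is precisely the content behind that remark and matches Lurie's treatment.
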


\begin{proof}[Sketch of proof]
	For the projective model structure, one checks by hand that simple (trivial) cofibrations have the left lifting property with respect to all degreewise fibrations (trivial fibrations).
%	One then takes a generating set of cofibrations in~\( \catC \) and shows that the corresponding simple cofibrations, for all choices of objects~\( \vgamma[0] \in \catGamma \), form a generating set of cofibrations for~%
%	\(
%		%\tup { { \catC[diag=\catGamma] } , \modelstructureproj }
%		\catC[diag=\catGamma,proj,smash]
%	\).
	One then checks that the mentioned simple (trivial) cofibrations form a generating set.
	The injective model structure, on the other hand, requires more work and has a less explicit set of generating cofibrations.
\end{proof}

\begin{propositionbreak}[Proposition {{\parencite[Remark~A.2.8.6]{htt}}}]%
	\label{res:projective_injective_quillen_functorial}
	A Quillen adunction \(
		\mF \colon \catC \rightleftarrows \catD \noloc \mG
	\)
	between combinatorial model categories induces Quillen adjunctions
	\[
	\begin{tikzcd}[sep=scriptsize]
		\catC[diag=\catGamma,proj]
		\ar[r,yshift=1.5pt]
		&
		\ar[l,yshift=-1.5pt]
		\catD[diag=\catGamma,proj]
	\end{tikzcd}
		\qquad\text{and}\qquad
	\begin{tikzcd}[sep=scriptsize]
		\catC[diag=\catGamma,inj]
		\ar[r,yshift=1.5pt]
		&
		\ar[l,yshift=-1.5pt]
		%\rightleftarrows
		\catD[diag=\catGamma,inj]
	\end{tikzcd}
	\]
	which are Quillen equivalences if \( \tup { \mF , \mG } \)~are.
\end{propositionbreak}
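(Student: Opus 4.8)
The plan is to realise the induced adjunction by post-composition and then to check the Quillen and Quillen-equivalence conditions componentwise, reducing everything to the given facts about~\( \mF \) and~\( \mG \). By~\cref{res:existence_of_proj_inj_model_structures} the four model structures in sight exist; combinatoriality is used for nothing else.

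First I would set up the adjunction. Post-composition with~\( \mF \) and~\( \mG \) gives functors \( \mF[push]\colon\catC[diag=\catGamma]\rightleftarrows\catD[diag=\catGamma]\noloc\mG[push] \), sending \( \mathcal{X}\mapsto\mF\circ\mathcal{X} \) and \( \mathcal{Y}\mapsto\mG\circ\mathcal{Y} \) respectively; the adjunction is obtained by transposing a natural transformation componentwise, so the unit and counit of \( \mF[push]\dashv\mG[push] \) are computed componentwise from those of \( \mF\dashv\mG \). Since weak equivalences, fibrations and cofibrations in the projective and injective structures are all detected componentwise, and \( \mG \) preserves fibrations and trivial fibrations while \( \mF \) preserves cofibrations and trivial cofibrations, it is immediate that \( \mG[push] \) is right Quillen for the projective structures and \( \mF[push] \) is left Quillen for the injective structures; this gives the two asserted Quillen adjunctions.

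The one piece of real content is the auxiliary claim that a projectively, resp.\ injectively, cofibrant diagram is componentwise cofibrant in~\( \catC \), and dually that such a fibrant diagram is componentwise fibrant. Three of these four statements are immediate, since injective cofibrations and projective fibrations are by definition the componentwise ones. For the remaining one --- projectively cofibrant implies componentwise cofibrant --- I would use~\cref{res:existence_of_proj_inj_model_structures}: a projectively cofibrant diagram is a retract of a transfinite composite of pushouts of simple projective cofibrations \( \Coprod[\catGamma{\vgamma[0],\slot}]{\mvarphi} \) (\cref{res:simple_cofibrations}), and evaluation at an object~\( \vgamma \) preserves colimits and carries this generator to \( \Coprod[\catGamma{\vgamma[0],\vgamma}]{\mvarphi} \), a coproduct of copies of~\( \mvarphi \) and hence a cofibration in~\( \catC \); so evaluation sends projective cofibrations to cofibrations. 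The same computation shows projective cofibrations are in particular componentwise, hence injective, cofibrations, so the identity is right Quillen from~\( \catC[diag=\catGamma,inj] \) to~\( \catC[diag=\catGamma,proj] \) and therefore preserves fibrant objects --- which re-proves the fourth statement; alternatively one argues directly by testing an injectively fibrant diagram against the injective trivial cofibrations \( \miota[!kan]{\mvarphi} \) coming from left Kan extension along \( \vgamma[0]\into\catGamma \), via~\cref{res:Kan_extensions_Quillen_adjunctions}.

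Finally, assuming \( \tup{\mF,\mG} \) is a Quillen equivalence, I would deduce the same for \( \tup{\mF[push],\mG[push]} \) on either structure from the standard criterion: a Quillen adjunction is a Quillen equivalence exactly when, for all cofibrant~\( \mathcal{X} \) and fibrant~\( \mathcal{Y} \), a map \( \mF[push]\mathcal{X}\to\mathcal{Y} \) is a weak equivalence precisely when its transpose \( \mathcal{X}\to\mG[push]\mathcal{Y} \) is. By the auxiliary claim each~\( \mathcal{X}(\vgamma) \) is cofibrant in~\( \catC \) and each~\( \mathcal{Y}(\vgamma) \) fibrant in~\( \catD \), and since transposition and weak equivalences are computed componentwise, \( \mF[push]\mathcal{X}\to\mathcal{Y} \) is a weak equivalence iff each \( \mF(\mathcal{X}(\vgamma))\to\mathcal{Y}(\vgamma) \) is, iff (by the Quillen-equivalence property of \( \tup{\mF,\mG} \) at the cofibrant object~\( \mathcal{X}(\vgamma) \) and the fibrant object~\( \mathcal{Y}(\vgamma) \)) each transpose \( \mathcal{X}(\vgamma)\to\mG(\mathcal{Y}(\vgamma)) \) is, iff the transpose \( \mathcal{X}\to\mG[push]\mathcal{Y} \) is. One could instead treat only the projective case this way and transfer to the injective case, using that the identity functors \( \catC[diag=\catGamma,proj]\to\catC[diag=\catGamma,inj] \) and likewise for~\( \catD \) are Quillen equivalences --- left Quillen by the computation above, and Quillen equivalences automatically because they preserve weak equivalences on the nose --- together with two-out-of-three for Quillen equivalences. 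The main obstacle is thus really only the auxiliary claim about projectively cofibrant diagrams; given~\cref{res:existence_of_proj_inj_model_structures} it is routine, and everything else is formal.
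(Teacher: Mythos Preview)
Your argument is correct and is the standard one. Note, however, that the paper does not actually prove this proposition: it is stated with a citation to \textcite[Remark~A.2.8.6]{htt} and no proof is given. So there is nothing to compare against; you have supplied what the paper omits. Your reduction of the Quillen-equivalence statement to the componentwise criterion via the auxiliary claim (projectively cofibrant $\Rightarrow$ componentwise cofibrant, and dually) is exactly the right idea, and your derivation of that claim from the explicit generating set in \cref{res:existence_of_proj_inj_model_structures} is clean.
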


\section{The Reedy model structure}

A third approach exists to equip diagram categories~\( \catC[diag=\catGamma] \) with a model structure, provided the category~\( \catGamma \) has the structure of a Reedy category. Remarkably, unlike the projective and injective cases, this does not rely on any internal structure of~\( \catC \).

A category~\( \catGamma \) is called \textdef{Reedy} if it contains two subcategories
\( \catGamma[reedy+] , \catGamma[reedy-] \subset \catGamma \),
each containing all objects, such that
\begin{itemize}
	\item there exists a degree function
	\( \catob{\catGamma} \to \Z \),
	such that non-identity morphisms from~\( \catGamma[reedy+] \) strictly raise the degree
	and non-identity morphisms from~\( \catGamma[reedy-] \) strictly lower the degree (more generally, an ordinal number can be used instead of~\( \Z \));
	\item each morphism \( \mf \in \catGamma \) factors \emph{uniquely} as \( \mf = \mg \mh \) for \( \mg \in \catGamma[reedy+] \) and
	\( \mh \in \catGamma[reedy-] \).
\end{itemize}

We note that a direct category is Reedy with
\( \catGamma[reedy+] = \catGamma \),
and that an inverse category is Reedy with
\( \catGamma[reedy-] = \catGamma \).

\begin{remark}
	If \( \catGamma \)~is Reedy, then so is~\( \catGamma[op] \),
	with
	\( \catGamma[op,spar,reedy+] = \catGamma[reedy-,spar,op] \)
	and
	\( \catGamma[op,spar,reedy-] = \catGamma[reedy+,spar,op] \).
\end{remark}

\begin{example}\label{ex:delta_reedy}
	The simplex category~\( \catdelta \) is Reedy with~\( \catdelta[reedy+] \) consisting of injective maps and \( \catdelta[reedy-] \)~consisting of surjective maps.
	The degree function does the obvious thing,
	\( \ordset{n} \mapsto n \).
\end{example}

If \( \catGamma \)~is a Reedy category and \( \catC \)~is any model category, and if \( \mF \in \catC[diag=\catGamma] \)~is a diagram, we define the \textdef[latching object]{latching} and \textdef[matching object]{matching objects} by
\[
	\mF[latch=\vgamma]
	= \dirlim[ (\valpha\underset{\smash{\neq}}{ \to }\vgamma)\in\catGamma[reedy+] ]{ \mF { \valpha } }
\qquad\text{and}\qquad
	\mF[match=\vgamma]
	= \invlim[ (\vgamma\underset{\smash{\neq}}{\to}\valpha)\in\catGamma[reedy-] ]{ \mF { \valpha } }.
\]
In other words, the limit (resp.,\ colimit) runs over the category of all \emph{non-identity} maps \( \valpha \to \vgamma \) in~\( \catGamma[reedy+] \) (resp.,\ \( \vgamma \to \valpha \) in~\( \catGamma[reedy-] \)).
The \textdef{latching map} is the canonical map
\( \mF[latch=\vgamma,smash] \to \mF{\vgamma} \),
and the \textdef{matching map} is the canonical map
\( \mF{\vgamma} \to \mF[match=\vgamma,smash] \).

If \( \mf \colon \mF \to \mG \) is a map in~\( \catC[diag=\catGamma] \), then the \textdef{relative latching map} is the map
\[
	\mF { \vgamma }
	\varpushout[ limits, {\mF[latch=\vgamma]} ]
	\mG[latch=\vgamma]
	\longto
	\mG { \vgamma }
\]
given by the universal property of the pushout. We say that \( \mf \)~is a \textdef{(trivial) Reedy cofibration} if the relative latching map is a (trivial) cofibration in~\( \catC \).
If \( \mF = \varnothing \), we recover the latching map.
Dually, the \textdef{relative matching map}
is the map
\[
	\mF { \vgamma }
	\longto
	\mG { \vgamma }
	\varpullback[ limits, {\mG[match=\vgamma]} ]
	\mF[match=\vgamma]
\]
given by the universal property of the pullback. We say that \( \mf \)~is a \textdef{(trivial) Reedy fibration} if the relative matching map is a (trivial) fibration in~\( \catC \).
If \( \mF = * \), we recover the matching map.
%\fxfatal{Think about defining (absolute) latching/matching maps.}

\begin{propositionbreak}[{Proposition
	\parencite[Theorem~15.3.4]{hir}}]
	If \( \catC \)~is an arbitrary model category and \( \catGamma \)~is a Reedy category, then this defines a model structure on~\( \catC[diag=\catGamma,smash] \), called the \textdef{Reedy model structure}.
	The weak equivalences are componentwise weak equivalences.
	We shall write~\( \catC[diag=\catGamma,reedy,smash] \) when we equip the diagram category with this model structure.
\end{propositionbreak}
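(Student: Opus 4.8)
The plan is to verify the five model category axioms directly for the three classes — componentwise weak equivalences, Reedy cofibrations, Reedy fibrations — running essentially every step as a transfinite induction on the Reedy degree. Limits and colimits in \( \catC[diag=\catGamma] \) are computed objectwise in \( \catC \), so that axiom is free, and since the weak equivalences are defined componentwise, the two-out-of-three property and their closure under retracts are immediate. For Reedy cofibrations and Reedy fibrations, closure under retracts holds because the latching object \( \mF[latch=\vgamma] \) and matching object \( \mF[match=\vgamma] \) are functorial in \( \mF \): a retract of a map \( \mf \) induces at each \( \vgamma \) a retract of its relative latching (resp.\ matching) map in \( \catC \), and (trivial) cofibrations and fibrations there are retract-closed.

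The technical heart is a lifting lemma: a Reedy cofibration \( \mf \colon \mF \to \mG \) has the left lifting property against a Reedy fibration \( \mg \colon \mPhi \to \mPsi \) whenever, for every \( \vgamma \in \catGamma \), the relative latching map of \( \mf \) at \( \vgamma \) has the left lifting property against the relative matching map of \( \mg \) at \( \vgamma \) in \( \catC \). I would build the lift \( \mG \to \mPhi \) by induction on the degree \( n \) of objects. Having constructed it over the full subcategory of objects of degree \( < n \), one observes that for \( \deg\vgamma = n \) the objects \( \mF[latch=\vgamma],\mG[latch=\vgamma],\mPhi[match=\vgamma],\mPsi[match=\vgamma] \) depend only on that restriction, so extending the lift over the objects of degree \( n \) reduces, for each such \( \vgamma \) separately, to solving one lifting square in \( \catC \): the relative latching map of \( \mf \) against the relative matching map of \( \mg \). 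The content is that the source of that square records compatibility with the degree-raising maps into \( \vgamma \) (via \( \mG[latch=\vgamma] \)) and its target records compatibility with the degree-lowering maps out of \( \vgamma \) (via \( \mPhi[match=\vgamma] \)), so the chosen solutions assemble; the unique Reedy factorization of morphisms then upgrades this to naturality for \emph{all} morphisms of \( \catGamma \). Applying this with \( \mg \) a Reedy trivial fibration (resp.\ \( \mf \) a Reedy trivial cofibration) and using that cofibrations lift against trivial fibrations (resp.\ trivial cofibrations against fibrations) in \( \catC \) gives the lifting axiom.

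Then I would prove the factorization axiom by the same mechanism. To factor \( \mf \colon \mF \to \mG \) as \( \mF \to \mW \to \mG \) with the first map a Reedy cofibration and the second a Reedy trivial fibration, define \( \mW \) by induction on degree: with \( \mW \) already defined below degree \( n \), for each \( \vgamma \) of degree \( n \) apply the factorization axiom of \( \catC \) to the canonical map from \( \mF{\vgamma} \varpushout[limits,{\mF[latch=\vgamma]}] \mW[latch=\vgamma] \) to \( \mG{\vgamma} \varpullback[limits,{\mG[match=\vgamma]}] \mW[match=\vgamma] \) (both determined by lower-degree data), and let \( \mW{\vgamma} \) be the intermediate object; by construction the relative latching map of \( \mF \to \mW \) at \( \vgamma \) is a cofibration and the relative matching map of \( \mW \to \mG \) at \( \vgamma \) is a trivial fibration. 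The other factorization is dual. Interwoven with this I would establish the compatibility statement that a Reedy cofibration is a componentwise weak equivalence if and only if all its relative latching maps are \emph{trivial} cofibrations in \( \catC \) (and dually for fibrations): again a degree induction, using that for a Reedy cofibration the latching maps \( \mF[latch=\vgamma] \to \mG[latch=\vgamma] \) are themselves cofibrations, so that \( \mF{\vgamma} \to \mG{\vgamma} \) is the composite of a pushout of such a cofibration with the relative latching map and two-out-of-three applies. This reconciles the two uses of ``trivial Reedy cofibration'' appearing in the axioms and completes the verification — the weak equivalences being the componentwise ones by construction.

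The hard part is not any individual step but the bookkeeping of the induction: pinning down exactly which data at degree \( n \) is determined by the strictly lower degrees, checking functoriality of the latching and matching constructions, verifying that the incremental lifts and factorizations glue into honest maps of diagrams compatible with \emph{every} morphism of \( \catGamma \) rather than only the degree-changing ones, and treating limit ordinals in the degree induction. The substantive input that makes all of this well-founded is the theory of Reedy categories as set up just before the statement — in particular the unique factorization of every morphism into a degree-lowering map followed by a degree-raising one — which is precisely what forces the latching and matching objects at degree \( n \) to be functors of the part of the diagram of degree \( < n \).
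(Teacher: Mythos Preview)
The paper does not prove this proposition at all: it is stated as a citation of \textcite[Theorem~15.3.4]{hir} and immediately followed by the next cited result, with no proof given in between. Your outline is a correct sketch of the standard argument (essentially the one in Hirschhorn's book that the paper is citing): the degreewise induction for lifting and factorization, the compatibility of ``trivial Reedy cofibration'' with ``Reedy cofibration plus componentwise weak equivalence'', and the role of the unique Reedy factorization in gluing the inductive steps into honest natural transformations are all the right ingredients. So there is nothing to compare against here --- your proposal simply supplies what the paper chose to outsource to the reference.
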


\begin{propositionbreak}[{Proposition
	\parencite[Example~A.2.9.22]{htt}}]%
	\label{res:reedy_projective_injective_restriction}
	Let~\( \catC \) be a model category and \( \catGamma \)~a Reedy category.
	Then
	\begin{propositionlist}
		\item If \( \catGamma = \catGamma[reedy+] \)~is a direct category, the projective model structure~\( \catC[diag=\catGamma,proj] \)
		exists and coincides with the Reedy model structure.
		\item If \( \catGamma = \catGamma[reedy-] \)~is an inverse category,
		the injective model structure~\( \catC[diag=\catGamma,inj] \)
		exists and coincides with the Reedy model structure.
	\end{propositionlist}
	Furthermore, a map \( \mf \colon \mF \to \mG \) in~\( \catC[diag=\catGamma] \) is a
	\begin{propositionlist}[resume]
		\item\label{res:reedy_projective_injective_restriction_plus} (trivial) cofibration if and only if the restriction
		\( \smash{
			\mf[res=\catGamma[reedy+]]
			\colon
			\mF[res=\catGamma[reedy+]]
			\to
			\mG[res=\catGamma[reedy+]]
		} \)
		is a (trivial) projective cofibration in~\(
			\catC[diag=\catGamma[reedy+],proj,smash]
		\).
%		In particular, if \( \catGamma = \catGamma[reedy+] \) is direct,
%		then the projective model structure~\(\catC[diag=\catGamma,proj,smash] \)
%		exists and coincides with the Reedy model structure.
		\item\label{res:reedy_projective_injective_restriction_minus} (trivial) fibration if and only if the restriction
		\( \smash{
			\mf[res=\catGamma[reedy-]]
			\colon
			\mF[res=\catGamma[reedy-]]
			\to
			\mG[res=\catGamma[reedy-]]
		} \)
		is a (trivial) injective fibration in~\(
			\catC[diag=\catGamma[reedy-],inj,smash]
		\).
%		In particular, if \( \catGamma = \catGamma[reedy-] \) is inverse,
%		then the injective model structure~\( \catC[diag=\catGamma,inj,smash] \)
%		exists and coincides with the Reedy model structure.
	\end{propositionlist}
\end{propositionbreak}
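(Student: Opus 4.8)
The plan is to deduce all four assertions from the single fact recalled above — that the Reedy model structure on \( \catC[diag=\catGamma,smash] \) exists for every Reedy category \( \catGamma \) and every model category \( \catC \), with componentwise weak equivalences. Since a model structure is determined by its weak equivalences together with its fibrations (equivalently, its cofibrations), the two statements about direct and inverse categories reduce to identifying the Reedy fibrations, respectively cofibrations, with the componentwise ones; the last two statements then follow from the observation that latching objects see only the \( \catGamma[reedy+] \)-part of a diagram and matching objects only the \( \catGamma[reedy-] \)-part.

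First I would treat the direct case. If \( \catGamma = \catGamma[reedy+] \), then \( \catGamma[reedy-] \) has no non-identity morphism: such a morphism would lie in \( \catGamma[reedy+] \) as well, and the degree function would be forced simultaneously to strictly raise and to strictly lower its degree. Hence for every \( \vgamma \) the indexing category for the matching object is empty, so \( \mF[match=\vgamma] \) is the terminal object \( * \) of \( \catC \), and the relative matching map of \( \mf \colon \mF \to \mG \) at \( \vgamma \) collapses to \( \mf[\vgamma] \colon \mF{\vgamma} \to \mG{\vgamma} \) itself. Thus the Reedy (trivial) fibrations are exactly the componentwise (trivial) fibrations; combined with the statement about weak equivalences, this shows the Reedy model structure enjoys the property that defines the projective one, so by uniqueness it \emph{is} the projective model structure — which, in particular, exists. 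The inverse case is formally dual: when \( \catGamma = \catGamma[reedy-] \) the latching object \( \mF[latch=\vgamma] \) is the initial object \( \varnothing \), the relative latching map at \( \vgamma \) is \( \mf[\vgamma] \), and the Reedy model structure coincides with the injective one.

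For the two biconditional statements, the key remark is that, for a general Reedy \( \catGamma \), the wide subcategory \( \catGamma[reedy+] \) is itself a direct Reedy category: the degree function restricts, and the factorization \( \mf = \mg\mh \) of a morphism \( \mf \) of \( \catGamma[reedy+] \), with \( \mg \in \catGamma[reedy+] \) and \( \mh \in \catGamma[reedy-] \), is forced to be \( \mg = \mf \), \( \mh = \id \), so the \( \catGamma[reedy-] \)-part of \( \catGamma[reedy+] \) is trivial. Now \( \mF[latch=\vgamma] \) is the colimit of \( \mF \) over the non-identity maps of \( \catGamma[reedy+] \) into \( \vgamma \); it depends only on \( \mF[res=\catGamma[reedy+]] \), and it is computed by the very same colimit that defines the latching object of \( \mF[res=\catGamma[reedy+]] \) in the direct Reedy category \( \catGamma[reedy+] \). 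Since \( \catGamma \) and \( \catGamma[reedy+] \) have the same objects, the family of relative latching maps of \( \mf \), indexed by \( \vgamma \in \catGamma \), is literally the family of relative latching maps of \( \mf[res=\catGamma[reedy+]] \). Therefore \( \mf \) is a (trivial) Reedy cofibration in \( \catC[diag=\catGamma] \) if and only if \( \mf[res=\catGamma[reedy+]] \) is a (trivial) Reedy cofibration in \( \catC[diag=\catGamma[reedy+]] \), and by the direct case this is the same as being a (trivial) projective cofibration in \( \catC[diag=\catGamma[reedy+],proj,smash] \). The statement for fibrations is obtained dually, using that \( \catGamma[reedy-] \) is an inverse Reedy category, that matching objects depend only on the restriction to \( \catGamma[reedy-] \), and the inverse case.

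None of this is deep; the step calling for care is the bookkeeping in the third paragraph — checking that the Reedy structure inherited by \( \catGamma[reedy+] \) (and dually by \( \catGamma[reedy-] \)) reproduces exactly the same latching (matching) objects, so that the two notions of relative latching (matching) map coincide on the nose and not merely up to a comparison map. Once that identification is spelled out, the equivalences are immediate, and the direct and inverse statements come down to the triviality that a (co)limit over the empty category is the terminal (initial) object.
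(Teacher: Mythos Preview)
The paper does not give its own proof of this proposition; it is stated with a citation to \textcite[Example~A.2.9.22]{htt} and used as a black box. Your argument is correct and is essentially the standard one: for a direct category the matching objects are terminal so Reedy fibrations are componentwise, and latching objects of a diagram on a general Reedy~\( \catGamma \) depend only on the restriction to~\( \catGamma[reedy+] \) and agree with the latching objects computed there. The bookkeeping you flag in the last paragraph is indeed the only place requiring care, and you have handled it correctly.
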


\begin{propositionbreak}[{Proposition
	\parencite[Theorem~15.5.2]{hir}}]%
	\label{res:reedy_category_product}
	If \( \catGamma \) and \( \catGamma[prime] \)~are both Reedy categories, then so is \( \catGamma\times\catGamma[prime] \), and
	the three possible Reedy model structures one can put on
	\( \catC[diag=\catGamma\times\catGamma[prime]] \)
	agree, i.e.
	\[
		\catC[diag=\catGamma\times\catGamma[prime],reedy]
		=
		\catC[diag=\catGamma,reedy,spar=\big,smash,diag=\catGamma[prime],reedy]
		=
		\catC[diag=\catGamma[prime],reedy,spar=\big,smash,diag=\catGamma,reedy]
		.
	\]
\end{propositionbreak}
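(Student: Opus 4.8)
The plan is to dispatch the first claim by writing down the product Reedy structure and checking the axioms, and to reduce the agreement of the three model structures to a Fubini-type identity for latching objects. For the first claim, take \( \catGamma[reedy+]\times\catGamma[prime,reedy+] \) and \( \catGamma[reedy-]\times\catGamma[prime,reedy-] \) as the two distinguished subcategories of \( \catGamma\times\catGamma[prime] \), and the sum of the two given degree functions as its degree function (for ordinal-valued degrees one uses the commutative natural sum of ordinals, which is still strictly monotone in each variable separately). A non-identity morphism of \( \catGamma[reedy+]\times\catGamma[prime,reedy+] \) has a non-identity component; that component strictly raises its degree while the other component does not lower its degree, so the total degree strictly increases, and dually for the \( (-) \)-part. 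The unique \( (+,-) \)-factorisation of a morphism of \( \catGamma\times\catGamma[prime] \) is obtained by factoring each component, uniqueness being inherited factorwise.

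For the second claim I would first record two reductions. A model structure is determined by its class of weak equivalences together with its class of cofibrations, the fibrations then being the maps with the right lifting property against the trivial cofibrations; and in each of the three structures the weak equivalences are the same, namely those maps \( \mf\colon\mF\to\mG \) for which \( \mf { \vgamma , \vgamma[prime] } \) is a weak equivalence in \( \catC \) for all \( (\vgamma,\vgamma[prime]) \) — and this condition is visibly independent of whether \( \mf \) is read as a map of \( \catGamma\times\catGamma[prime] \)-diagrams, as a map of \( \catGamma[prime] \)-diagrams in \( \catC[diag=\catGamma,reedy] \), or the other way around. So it suffices to show that the three classes of cofibrations coincide.

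The technical heart is an identification of latching objects. Fix \( \vgamma\in\catGamma \) and \( \vgamma[prime]\in\catGamma[prime] \), and write \( I = (\catGamma[reedy+]\downarrow\vgamma) \) and \( J = (\catGamma[prime,reedy+]\downarrow\vgamma[prime]) \) for the relevant over-categories; each has a terminal object \( t \) (the relevant identity), and deleting it yields the respective latching category. The latching category of \( \catGamma\times\catGamma[prime] \) at \( (\vgamma,\vgamma[prime]) \) is \( I\times J \) with its terminal object removed; this is the union of the two sieves \( I\times(J\setminus t) \) and \( (I\setminus t)\times J \), whose intersection \( (I\setminus t)\times(J\setminus t) \) receives every morphism passing between them, so a colimit over the union is the pushout of the colimits over the three pieces. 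Since the latching functors are colimit-preserving (being restrictions followed by colimits), this presents the \( \catGamma\times\catGamma[prime] \)-latching object of \( \mF \) at \( (\vgamma,\vgamma[prime]) \) as a canonical pushout assembled from the \( \catGamma \)-latching object of \( \mF \), the \( \catGamma[prime] \)-latching object of \( \mF \), and the iterated latching object. Performing this for \( \mF \) and \( \mG \) simultaneously and chasing the resulting cube of pushouts, I expect to identify the relative latching map of \( \mf \) at \( (\vgamma,\vgamma[prime]) \), formed in \( \catC[diag=\catGamma\times\catGamma[prime],reedy] \), with the relative \( \catGamma \)-latching map at \( \vgamma \) of the relative \( \catGamma[prime] \)-latching map at \( \vgamma[prime] \) of \( \mf \), the latter being formed by regarding \( \mf \) as a map of \( \catGamma[prime] \)-diagrams in \( \catC[diag=\catGamma,reedy] \). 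Granting this, the conclusion is formal: \( \mf \) is a Reedy cofibration of \( \catC[diag=\catGamma\times\catGamma[prime],reedy] \) iff all of its relative latching maps are cofibrations in \( \catC \), iff (for each fixed \( \vgamma[prime] \), running over all \( \vgamma \)) every relative \( \catGamma[prime] \)-latching map of \( \mf \) is a Reedy cofibration of \( \catC[diag=\catGamma,reedy] \), iff \( \mf \) is a Reedy cofibration of \( \catC[diag=\catGamma,reedy,spar=\big,smash,diag=\catGamma[prime],reedy] \); interchanging \( \catGamma \) and \( \catGamma[prime] \) then covers the third structure, and the three model structures coincide.

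The main obstacle is exactly this cube-of-pushouts bookkeeping: computing the \( \catGamma \)-latching objects of the source and target of the relative \( \catGamma[prime] \)-latching map, pushing the colimit-preserving \( \catGamma \)-latching functor through those pushouts, and verifying that the relative latching map one obtains agrees canonically with the relative latching map over \( \catGamma\times\catGamma[prime] \). This is elementary but notation-heavy and must be carried out attentively; everything else — the product Reedy axioms, the comparison of weak equivalences, and the final logical deduction — is routine.
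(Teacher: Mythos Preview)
Your proposal is correct and follows the standard line of argument. Note, however, that the paper does not supply its own proof of this proposition: it is stated with a citation to \textcite[Theorem~15.5.2]{hir} and used as a black box. Your sketch is essentially the argument one finds there, the key point being exactly the identification of the relative latching map for the product Reedy structure with the iterated relative latching map (this is Hirschhorn's Lemma~15.3.9, which carries the bookkeeping you flag as the main obstacle). So there is nothing to compare against in the paper itself; your approach matches the cited source.
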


%\newpage

\section{Homotopy limits}

The following theorem is the basis for all our homotopy limit
formulae:

\begin{theorem}\label{res:end_functor_quillen}
	Let~\( \catC \) be a model category and~\( \catGamma \) a category. Regard the functor category~\( \catC[diag=\catGamma[op]\times\catGamma] \) as a model category in any of the following ways:
	\begin{theoremlist}
		\item\label{res:end_functor_quillen_gammaop_gamma} as~\(
			\catC[diag=\catGamma[op]\times\catGamma]
			= \catC[
				diag=\catGamma[op],
				proj,
				spar,
				diag=\catGamma,
				inj,
			]
		\) (assuming this model structure exists);
		\item\label{res:end_functor_quillen_gamma_gammaop} as~\(
			\catC[diag=\catGamma[op]\times\catGamma]
			= \catC[
				diag=\catGamma,
				proj,
				spar,
				diag=\catGamma[op],
				inj,
			]
		\) (assuming this model structure exists);
		\item\label{res:end_functor_quillen_reedy} as~\(
			\catC[diag=\catGamma[op]\times\catGamma]
			= \catC[
				diag=\catGamma[op]\times\catGamma,
				reedy,
			]
		\) (assuming \( \catGamma \)~is Reedy).
	\end{theoremlist}
	Then the end functor~\(
		\End[\catGamma]
		\colon
		\catC[diag=\catGamma[op]\times\catGamma]
		\to
		\catC
	\)
	is right Quillen.
\end{theorem}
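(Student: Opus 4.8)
The plan is to exploit the adjunction of \cref{res:end_adjunction}, which exhibits $\End[\catGamma]$ as the right adjoint of $\Coprod[\Hom[\catGamma]{\slot,\slot}]\colon\catC\to\catC[diag=\catGamma[op]\times\catGamma]$. It therefore suffices, in each of the three cases, to check that this left adjoint is left Quillen, i.e.\ that it sends every (trivial) cofibration $\mvarphi\colon\vc\to\vc[prime]$ of $\catC$ to a (trivial) cofibration of $\catC[diag=\catGamma[op]\times\catGamma]$ in the model structure under consideration. Explicitly, the induced map assigns to a pair $(\vgamma[1],\vgamma[2])$ the coproduct map $\Coprod[\Hom[\catGamma]{\vgamma[1],\vgamma[2]}]{\mvarphi}$ of one copy of $\mvarphi$ for each morphism $\vgamma[1]\to\vgamma[2]$, and everything hinges on understanding how these coproduct maps fit together over $\catGamma[op]\times\catGamma$.

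For the first model structure, curry $\catC[diag=\catGamma[op]\times\catGamma]$ as $\catfun{\catGamma,\catfun{\catGamma[op],\catC}}$; then, by definition of the injective model structure, a map is a (trivial) cofibration in $\catC[diag=\catGamma[op],proj,spar,diag=\catGamma,inj]$ precisely when it is componentwise over $\catGamma$ a (trivial) projective cofibration in $\catC[diag=\catGamma[op],proj]$. Fixing $\vgamma[2]\in\catGamma$, the corresponding component of the induced map is the functor $\vgamma[1]\mapsto\Coprod[\Hom[\catGamma]{\vgamma[1],\vgamma[2]}]{\mvarphi}$ on $\catGamma[op]$, which --- since $\Hom[\catGamma]{\slot,\vgamma[2]}$ is just the functor on $\catGamma[op]$ represented by $\vgamma[2]$ --- is exactly the simple (trivial) projective cofibration produced by \cref{res:simple_cofibrations} applied to the category $\catGamma[op]$ and the object $\vgamma[2]$. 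Hence the induced map is a (trivial) cofibration, and the left adjoint is left Quillen. The second model structure is symmetric: curry as $\catfun{\catGamma[op],\catfun{\catGamma,\catC}}$, fix $\vgamma[1]\in\catGamma$, and note that the component $\vgamma[2]\mapsto\Coprod[\Hom[\catGamma]{\vgamma[1],\vgamma[2]}]{\mvarphi}$ is the simple (trivial) projective cofibration given by \cref{res:simple_cofibrations} applied directly to $\catGamma$ and $\vgamma[1]$.

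For the Reedy case one must instead verify that every relative latching map of the induced map is a (trivial) cofibration in $\catC$. The crucial input is a combinatorial lemma about the bifunctor $\Hom[\catGamma]{\slot,\slot}\colon\catGamma[op]\times\catGamma\to\catset$, regarded as a Reedy diagram of sets: its latching map at an object $(\vgamma[1],\vgamma[2])$ is a monomorphism, whose image consists of exactly those morphisms $\mf\colon\vgamma[1]\to\vgamma[2]$ whose Reedy factorization $\mf=\mg\mh$ (with $\mg\in\catGamma[reedy+]$ and $\mh\in\catGamma[reedy-]$) is \emph{non-trivial}, in the sense that $\mg$ and $\mh$ are not both identities; so the complement of this image is empty unless $\vgamma[1]=\vgamma[2]$, in which case it consists of the identity morphism of $\vgamma[1]$ alone. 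The lemma is proved by showing that every element of the colimit defining the latching object is identified with a canonical ``normal form'' extracted from the Reedy factorization of its image in $\Hom[\catGamma]{\vgamma[1],\vgamma[2]}$, whence uniqueness of Reedy factorizations gives both surjectivity onto the described image and injectivity. Granting the lemma, and using that the copower functor $\slot\tens\vc\colon\catset\to\catC$ is cocontinuous, the latching object of the induced diagram at $(\vgamma[1],\vgamma[2])$ is the copower of the latching object of $\Hom[\catGamma]{\slot,\slot}$ with $\vc$, with latching map induced by the monomorphism just described. The relative latching map of the induced map at $(\vgamma[1],\vgamma[2])$ is therefore the pushout--product of that monomorphism with $\mvarphi$, formed along the copower $\catset\times\catC\to\catC$; and since the monomorphism has the complement described above, this pushout--product is an isomorphism when $\vgamma[1]\neq\vgamma[2]$ and is $\mvarphi$ itself when $\vgamma[1]=\vgamma[2]$ --- in either case a (trivial) cofibration in $\catC$. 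Hence the induced map is a (trivial) Reedy cofibration, which completes the third case.

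The one genuinely substantial step is this last combinatorial lemma: proving that the latching maps of the $\Hom$-bifunctor are monomorphisms and pinning down their complements requires a careful analysis of the colimit defining the latching object, pivoting on uniqueness of the Reedy factorization. Everything else --- the explicit form of the left adjoint coming from \cref{res:end_adjunction}, the currying bookkeeping in the first two cases, and the elementary pushout--product behaviour of the copower $\catset\times\catC\to\catC$ against monomorphisms of sets --- is routine.
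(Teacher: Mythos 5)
Your treatment of parts (i) and (ii) coincides with the paper's: both reduce, via the adjunction of \cref{res:end_adjunction}, to showing that the left adjoint~\( \Coprod[\Hom[\catGamma]] \) is left Quillen, check the injective condition componentwise, and identify each component as a simple projective cofibration from \cref{res:simple_cofibrations}. For the Reedy case (iii), however, you take a genuinely different route. The paper avoids latching objects entirely: it uses \cref{res:reedy_projective_injective_restriction} and \cref{res:reedy_category_product} to reduce Reedy cofibrancy to projective cofibrancy of the restriction to the direct subcategory \( \catGamma[reedy+]\times\catGamma[reedy-,spar,op] \), and then applies the unique factorization property to decompose \( \Coprod[\catGamma{\slot,\slot}]{\vc} \) into a coproduct, over \( \vgamma[0]\in\catGamma \), of simple projective cofibrations for that product category. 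You instead compute relative latching maps directly, resting on the combinatorial lemma that the latching map of the bifunctor \( \Hom[\catGamma]{\slot,\slot} \) is a monomorphism of sets whose complement consists of the identity alone on the diagonal and is empty off it; cocontinuity of the copower then exhibits each relative latching map as a pushout--product, which is an isomorphism off the diagonal and a cobase change of~\( \mvarphi \) on it (a coproduct of \( \mvarphi \) with an identity map --- not quite \( \mvarphi \) itself as you state, though this changes nothing). Your lemma is correct, and the normal-form argument from unique Reedy factorization is the right way to prove it. The trade-off: the paper's argument is shorter because the combinatorics is outsourced to the cited restriction criterion, whereas yours is self-contained and makes explicit which single copy of~\( \mvarphi \) appears freshly at each object, at the price of the latching-object computation.
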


\begin{proof}
	We initially prove the first statement and obtain the second one by duality.
	By \cref{res:end_adjunction}, it suffices to check that the left
	adjoint~\( \Coprod[\Hom[\catGamma]] \)
	takes (trivial) cofibations in~\( \catC \) to (trivial) cofibations in~\(
		\catC[
			diag=\catGamma[op],
			modelstructure=\modelstructureproj,
			spar,
			diag=\catGamma,
			modelstructure=\modelstructureinj,
			smash,
		]
	\).
	If \( \vc \to \vc[prime] \)~is a (trivial) cofibration in~\( \catC \), then
	we must therefore consider the map~\(
		\Coprod [ \catGamma {\slot ,\slot } ]{ \vc }
		\to
		\Coprod [ \catGamma {\slot ,\slot } ]{ \vc[prime] }
	\)
	in~\(
		\catC[
			diag=\catGamma[op],
			modelstructure=\modelstructureproj,
			spar,
			diag=\catGamma,
			modelstructure=\modelstructureinj,
			smash,
		]
	\).
	Checking that this is a (trivial) injective cofibration
	over~\( \catGamma \)
	amounts, by definition, to checking this componentwise. But for a fixed~\( \vgamma[0] \in\catGamma \), this component is
	\(
		\Coprod[ \catGamma{\slot , \vgamma[0] } ]{ \vc }
		\to
		\Coprod[ \catGamma{\slot , \vgamma[0] } ]{ \vc[prime] },
	\)
	which is a simple (trivial) projective cofibration
	in~\( \catC[diag=\catGamma[op],smash] \).
	
	For the Reedy case, we recall from
	\cref{res:reedy_projective_injective_restriction,res:reedy_category_product}
	that being a (trivial) cofibration in the model category~\(
		\catC[diag=\catGamma\times\catGamma[op],reedy,smash]
		= \catC[diag=\catGamma,reedy,spar,diag=\catGamma[op],reedy,smash]
	\)
	is equivalent to the restriction being a (trivial) cofibration
	in
	\[
		\catC[
			diag={\catGamma[reedy+]},proj,
			spar=\big,smash,
			diag={\catGamma[op,spar,reedy+]},proj,
			%smash,
		]
		=
		\catC[
			diag={\catGamma[reedy+]},proj,
			spar=\big,smash,
			diag={\catGamma[reedy-,spar,op]},proj,
			%smash,
		].
	\]
	But we have, by the unique factorization property of Reedy categories, that
	\[
		\Coprod[ \catGamma{\slot ,\slot } ]{ \vc }
		= \Coprod[ \vgamma[0] \in \catGamma ]{
			\Coprod [ \catGamma[reedy-] ({\slot , \vgamma[0] }) ]{
				\Coprod [ \catGamma[reedy+] ({ \vgamma[0] ,\slot }) ]{ \vc }
			}
		}
	\]
	for any~\( \vc\in\catC \).
	These consist of coproducts of exactly the same form as the ones appearing in the definition of simple (trivial) projective cofibrations
	(\cref{res:simple_cofibrations}).
	Thus we find that for any (trivial) cofibration~\( \vc\to\vc[prime] \) in~\( \catC \),
	the map~\(
		\Coprod[ \catGamma {\slot ,\slot } ]{ \vc }
		\to
		\Coprod[ \catGamma {\slot ,\slot } ]{ \vc[prime] }
	\)
	is a (trivial) cofibration
	in~\( \catC[diag=\catGamma[op]\times\catGamma,reedy,smash] \).
\end{proof}

Thus we can derive the end using any of these three model structures, when available. Write \( \End[rder,\catGamma] \colon\catC[diag=\catGamma[op]\times\catGamma]\to\catC \) for the derived functor, which we shall call the \textdef{homotopy end}.

\begin{corollary}\label{res:homotopy_limits_via_ends}
	If \( \catC \)~is a combinatorial model category and \( \catGamma \)~a category, then for a diagram~\( \mF \in \catC[diag=\catGamma] \),
	\[\textstyle
		\invholim[\catGamma]{ \mF } = \End[rder,\catGamma]{ \mF }
		= \End [ \catGamma]{ \mfibrep{ \mF } }
		,
	\]
	where \( \mfibrep \)~is a fibrant replacement with respect to the model structure~\(
		\catC[diag=\catGamma,proj,spar,diag=\catGamma[op],inj,smash]
	\) or, if \( \catGamma \)~is Reedy,
	in~\(
		\catC[diag=\catGamma[op]\times\catGamma,reedy,smash]
	\).
\end{corollary}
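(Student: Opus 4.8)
The strategy is to realise the limit functor as a composite of right Quillen functors and derive it one factor at a time. Write \( p\colon\catGamma[op]\times\catGamma\to\catGamma \) for the projection onto the second factor and \( p^{*}\colon\catC[diag=\catGamma]\to\catC[diag=\catGamma[op]\times\catGamma] \) for restriction along \( p \), so that \( p^{*}{\mF} \) is the bifunctor constant in the first variable with value \( \mF \); by the universal property of the end (cf.\ the second example in the section on the end construction) one has \( \End[\catGamma]{p^{*}{\mF}}=\invlim[\catGamma]{\mF} \), i.e.\ \( \invlim[\catGamma]=\End[\catGamma]\circ p^{*} \). The key observation is that \( p^{*} \) is \emph{right Quillen} as a functor \( \catC[diag=\catGamma,inj]\to\catC[diag=\catGamma[op],proj,spar,diag=\catGamma,inj] \), the model structure of \cref{res:end_functor_quillen_gammaop_gamma} (which exists since \( \catC \) is combinatorial, by \cref{res:existence_of_proj_inj_model_structures} applied iteratively). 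Indeed \( p^{*} \) has a left adjoint, namely left Kan extension along \( p \), so it is enough to see that it preserves (trivial) fibrations; but, viewing objects of \( \catC[diag=\catGamma[op],proj,spar,diag=\catGamma,inj] \) as functors \( \catGamma[op]\to\catC[diag=\catGamma,inj] \), the (trivial) fibrations of that projective model structure are exactly the maps that are componentwise over \( \catGamma[op] \) (trivial) injective fibrations, and \( p^{*} \) of a map of \( \catGamma \)-diagrams is, componentwise over \( \catGamma[op] \), constantly equal to that map. The same description shows that \( p^{*} \) preserves and reflects weak equivalences, since on both sides these are the componentwise weak equivalences of \( \catC \); in particular \( p^{*} \) preserves fibrant objects.

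Now I would assemble the pieces. By definition \( \invholim[\catGamma] \) is the right derived functor of \( \invlim[\catGamma]\colon\catC[diag=\catGamma,inj]\to\catC \) (\cref{res:limit_functor_quillen_invlim}), so picking an injective-fibrant replacement \( \mfibrep{\mF} \) of a diagram \( \mF \) gives \( \invholim[\catGamma]{\mF}=\invlim[\catGamma]{\mfibrep{\mF}}=\End[\catGamma]{p^{*}{\mfibrep{\mF}}} \) by the factorization above. On the other hand \( p^{*}{\mfibrep{\mF}} \) is weakly equivalent to \( p^{*}{\mF} \) (as \( p^{*} \) preserves weak equivalences) and is fibrant in \( \catC[diag=\catGamma[op],proj,spar,diag=\catGamma,inj] \) (as \( p^{*} \) is right Quillen), hence is a fibrant replacement of \( p^{*}{\mF} \); since \( \End[\catGamma] \) is right Quillen for this model structure by \cref{res:end_functor_quillen}, applying it to that replacement is precisely how the homotopy end \( \End[rder,\catGamma]{\mF} \) is computed. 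Comparing, \( \invholim[\catGamma]{\mF}=\End[rder,\catGamma]{\mF} \). Finally, \cref{res:end_functor_quillen} also makes \( \End[\catGamma] \) right Quillen for \( \catC[diag=\catGamma,proj,spar,diag=\catGamma[op],inj] \) and, when \( \catGamma \) is Reedy, for \( \catC[diag=\catGamma[op]\times\catGamma,reedy] \); these three model structures share the same (componentwise) weak equivalences, and the total right derived functor of a right Quillen functor — an absolute right Kan extension along the homotopy-category localization — depends only on the weak equivalences, so all three compute the same \( \End[rder,\catGamma] \). Hence the displayed equality \( \End[rder,\catGamma]{\mF}=\End[\catGamma]{\mfibrep{\mF}} \) holds with \( \mfibrep{\mF} \) a fibrant replacement of \( p^{*}{\mF} \) in any of the three, in particular the two named in the statement.

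The point that needs the most care is the choice of intermediate model structure. The argument above runs smoothly only for \cref{res:end_functor_quillen_gammaop_gamma}, where the constant direction \( \catGamma[op] \) is \emph{projective} and fibrations are detected componentwise; for the variant of \cref{res:end_functor_quillen_gamma_gammaop}, in which \( \catGamma[op] \) is injective, \( p^{*} \) is \emph{not} right Quillen — a \( \catGamma[op] \)-constant bifunctor with fibrant values need not be injectively fibrant — so a single fibrant replacement no longer suffices, and the agreement with that model structure (and with the Reedy one) has to be recovered afterwards from the uniqueness of total derived functors. Beyond this, the proof is a formal assembly of \cref{res:end_functor_quillen}, \cref{res:limit_functor_quillen_invlim}, and the elementary identity \( \invlim[\catGamma]=\End[\catGamma]\circ p^{*} \).
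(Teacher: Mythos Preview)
Your overall strategy---factoring \( \invlim[\catGamma] = \End[\catGamma] \circ p^{*} \) and showing both factors are right Quillen---is essentially the paper's. The gap is in your argument that \( p^{*} \colon \catC[diag=\catGamma,inj] \to \catC[diag=\catGamma[op],proj,spar,diag=\catGamma,inj] \) preserves (trivial) fibrations. You claim that the fibrations of the target, when ``viewed as functors \( \catGamma[op] \to \catC[diag=\catGamma,inj] \)'', are those of ``that projective model structure'', i.e.\ the componentwise-over-\( \catGamma[op] \) injective fibrations. But that describes the fibrations of \( \catC[diag=\catGamma,inj,spar,diag=\catGamma[op],proj] \), which is a \emph{different} model structure from \( \catC[diag=\catGamma[op],proj,spar,diag=\catGamma,inj] \). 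The identity functor is left Quillen from the former to the latter (a generating projective cofibration \( \Coprod[\catGamma[op]{\vgamma[0],slot}]{\mvarphi} \) with \( \mvarphi \) a componentwise cofibration is, componentwise over~\( \catGamma \), a simple projective cofibration in~\( \catC[diag=\catGamma[op]] \)); hence the latter has \emph{fewer} fibrations, not the same ones. Your argument therefore only shows that \( p^{*} \) lands in the larger class, which does not suffice.

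The paper avoids describing these fibrations altogether. By \cref{res:limit_functor_quillen_dirlim}, \( \mconst \colon \catC \to \catC[diag=\catGamma[op],proj] \) is right Quillen (it is right adjoint to \( \dirlim \)). Applying \cref{res:projective_injective_quillen_functorial} to this right Quillen functor with the injective structure over~\( \catGamma \) gives a right Quillen functor \( \catC[diag=\catGamma,inj] \to \catC[diag=\catGamma[op],proj,spar,diag=\catGamma,inj] \), which is exactly~\( p^{*} \). With this correction, the rest of your proof---including the observation that the total right derived functor depends only on the class of weak equivalences, so that the three model structures of \cref{res:end_functor_quillen} compute the same homotopy end---is correct and matches the paper.
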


\begin{proof}
	First write
	\(
		\invholim { \mF }
		=
		\invlim { \mfibrep[\catGamma]{ \mF } }
	\)
	for some fibrant replacement functor~\( \mfibrep[\catGamma] \)
	in~\( \catC[diag=\catGamma,inj,smash] \).
	Now \cref{res:limit_functor_quillen_dirlim,res:projective_injective_quillen_functorial} show that the constant functor embedding~\(
		\catC[diag=\catGamma,inj,smash]
		\into
		\catC[diag=\catGamma[op],proj,spar,diag=\catGamma,inj,smash]
	\)
	is right Quillen and thus preserves fibrant objects.
	Thus \( \mfibrep[\catGamma]{ \mF } \)~is also fibrant
	in~\( \catC[diag=\catGamma[op],proj,spar,diag=\catGamma,inj,smash] \).
	This proves the first equality sign. The second one is clear.
\end{proof}

Of course, even though \( \mF \) as a diagram in~\( \catC[diag=\catGamma[op]\times\catGamma] \) was constant with respect to the first variable, \( \mfibrep{ \mF } \) is in general not. Remarkably, since ends calculate naturality between the two variables, this often makes calculations of homotopy limits more manageable, compared to resolving the diagram inside~\( \catC[diag=\catGamma,inj,smash] \).

%\begin{examplebreak}[Example: Totalization formula]
%	
%\end{examplebreak}

\begin{corollary}\label{res:holim_direct_category}
	Suppose \( \catGamma \)~is a direct category,
	and let~\(
		\smash{
		\mfibrep
		\colon
		\catC
		\to
		%\catC[diag=\catGamma[op],reedy]
		%=
		\catC[diag=\catGamma[op],inj]
		}
	\)
	be a functor that takes~\( \vc\in\catC \) to a fibrant replacement of the constant diagram at~\( \vc \).
	Then
	\[\textstyle
		\invholim[\catGamma]{ \mF }
		=
		\End[\vgamma\in\catGamma]{
			\mfibrep{ \mF{\vgamma} }(\vgamma)
		}.
	\]
\end{corollary}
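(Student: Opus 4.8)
The plan is to unwind \cref{res:homotopy_limits_via_ends} and exhibit the bifunctor $\mfibrep\circ\mF$ as exactly the fibrant replacement it calls for. Since $\catGamma$ is direct, its opposite $\catGamma[op]$ is inverse, so by \cref{res:reedy_projective_injective_restriction} the injective model structure $\catC[diag=\catGamma[op],inj]$ exists over any model category — this is precisely the target of the given functor $\mfibrep$ — and, $\catGamma$ being direct, the projective model structure on $\catfun(\catGamma,\catC[diag=\catGamma[op],inj])=\catC[diag=\catGamma[op]\times\catGamma]$ exists as well; it is the model structure $\catC[diag=\catGamma,proj,spar,diag=\catGamma[op],inj]$ appearing in \cref{res:homotopy_limits_via_ends}, which for direct $\catGamma$ also coincides with the Reedy model structure by \cref{res:reedy_projective_injective_restriction,res:reedy_category_product}. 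By that corollary, $\invholim[\catGamma]{\mF}=\End[\catGamma]{\widehat{\mF}}$ for \emph{any} fibrant replacement $\widehat{\mF}$, in this model structure, of $\mF$ regarded as a bifunctor constant in its first variable; and $\End[\catGamma]{\widehat{\mF}}=\End[\vgamma\in\catGamma]{\widehat{\mF}(\vgamma,\vgamma)}$ by definition of the end. So it suffices to produce one such $\widehat{\mF}$ whose diagonal is $\widehat{\mF}(\vgamma,\vgamma)=\mfibrep{\mF{\vgamma}}(\vgamma)$.

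I would take $\widehat{\mF}$ to be the composite $\mfibrep\circ\mF\colon\catGamma\to\catC\to\catC[diag=\catGamma[op]]$, i.e.\ the object of $\catfun(\catGamma,\catC[diag=\catGamma[op]])=\catC[diag=\catGamma[op]\times\catGamma]$ sending $(\vgamma',\vgamma)$ to $\mfibrep{\mF{\vgamma}}(\vgamma')$; its diagonal is then manifestly $\mfibrep{\mF{\vgamma}}(\vgamma)$. Under the identification above, $\mF$ (constant in the first variable) corresponds to $\mconst\circ\mF$, where $\mconst\colon\catC\to\catC[diag=\catGamma[op]]$ is the constant-diagram functor, so the structural weak equivalences $\mconst{\mF{\vgamma}}\to\mfibrep{\mF{\vgamma}}$ in $\catC[diag=\catGamma[op],inj]$, natural in $\vgamma$ (if one is uneasy about naturality, replace $\mfibrep$ by the functorial fibrant replacement coming from the functorial factorizations of $\catC[diag=\catGamma[op],inj]$ — this changes nothing), assemble into a map $\mF\to\widehat{\mF}$ of bifunctors. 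Since injective weak equivalences are precisely the componentwise ones, each component $\mF{\vgamma}\to\mfibrep{\mF{\vgamma}}(\vgamma')$ is a weak equivalence in $\catC$, so $\mF\to\widehat{\mF}$ is a componentwise, hence projective, weak equivalence. For fibrancy: in the projective model structure on $\catfun(\catGamma,\catC[diag=\catGamma[op],inj])$ an object is fibrant exactly when each of its values in $\catC[diag=\catGamma[op],inj]$ is fibrant (fibrations are componentwise), and $\widehat{\mF}(\vgamma)=\mfibrep{\mF{\vgamma}}$ is fibrant there by the defining property of $\mfibrep$. Hence $\widehat{\mF}$ is a fibrant replacement of $\mF$, and $\invholim[\catGamma]{\mF}=\End[\catGamma]{\widehat{\mF}}=\End[\vgamma\in\catGamma]{\mfibrep{\mF{\vgamma}}(\vgamma)}$.

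The only genuinely delicate point is the bookkeeping of variances: one must take the end $\End[\catGamma]$ with respect to exactly the "$\catGamma$ projective on the outside, $\catGamma[op]$ injective on the inside" model structure for which \cref{res:end_functor_quillen} makes it right Quillen, and one must check that this model structure is actually available here — which it is, because $\catGamma$ is direct (so $\catGamma$-indexed projective diagrams form a model category over any model category) and $\catGamma[op]$ is inverse (so $\catGamma[op]$-indexed injective diagrams do as well). Once the structures are pinned down, the two slogans "projectively fibrant $=$ valuewise fibrant" and "injective weak equivalence $=$ componentwise weak equivalence" do all the work, and there is nothing left to compute.
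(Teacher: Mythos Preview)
Your proof is correct and follows essentially the same route as the paper's: both arguments observe that $\mfibrep\circ\mF$ is fibrant in $(\catC^{\catGamma^{\mathrm{op}}}_{\mathrm{Inj}})^{\catGamma}_{\mathrm{Proj}}$ (since projective fibrancy is componentwise), and then use the Reedy chain of equalities from \cref{res:reedy_projective_injective_restriction,res:reedy_category_product} to identify this with the model structure $(\catC^{\catGamma}_{\mathrm{Proj}})^{\catGamma^{\mathrm{op}}}_{\mathrm{Inj}}$ required by \cref{res:end_functor_quillen}. The paper does this in two lines and cites \cref{res:end_functor_quillen} directly, whereas you route through the packaged \cref{res:homotopy_limits_via_ends} and spell out the weak-equivalence and fibrancy checks explicitly; but the substance is identical.
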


\begin{proof}
	Clearly, \( \fibrep{ \mF } \)~is a
	fibrant replacement
	inside~\(
		\catC[
			diag=\catGamma[op],
			inj,
			spar,
			diag=\catGamma,
			proj,
			smash,
		]
	\).
	By \cref{res:reedy_projective_injective_restriction,res:reedy_category_product},
	this model category is equal
	to
	\[
		\catC[
			diag=\catGamma[op],
			inj,
			spar=\big,
			smash,
			diag=\catGamma,
			proj,
			%smash,
		]
		=
		\catC[
			diag=\catGamma[op],
			reedy,
			spar=\big,
			smash,
			diag=\catGamma,
			reedy,
			%smash,
		]
		=
		\catC[
			diag=\catGamma,
			reedy,
			spar=\big,
			smash,
			diag=\catGamma[op],
			reedy,
			%smash,
		]
		=
		\catC[
			diag=\catGamma,
			proj,
			spar=\big,
			smash,
			diag=\catGamma[op],
			inj,
			%smash,
		]
	,
	\]
	so the result follows from~\cref{res:end_functor_quillen}.
\end{proof}

\section{Bousfield--Kan formula}

In \textcite[chapter~19]{hir}, homotopy limits are being developed
for arbitrary model categories via a machinery of simplicial resolutions. In this section, we use \cref{res:end_functor_quillen}/\cref{res:homotopy_limits_via_ends} to explain why this machinery works.
Throughout, we denote by~\( \catsset \)
the category of simplicial sets endowed with the Quillen model structure.

If \( \catC \)~is a (complete) category and~\( \ssetX{*} \in \catC[diag=\catdelta[op]] \)
a simplicial diagram in~\( \catC \), we may extend~\( \ssetX{*} \)
to a continuous functor~\( \ssetX \colon \catsset[op] \to \catC \)
via the right Kan extension
along
the Yoneda embedding~\(
	\catdelta[op] \into \catsset[op]
\):
\[
	\ssetX[powering=\ssetK]
	=
	\invlim[\simp{n}\to\ssetK]{ \ssetX{n} },
	\qquad
	\ssetK\in\catsset.
\]
If \( \catC \)~is a model category, the matching object at~\( \ordset{n} \)
is~\( \ssetX[match=n]{*} = \cosimpX[powering=\simp{n}[boundary]] \),
and so \( \ssetX{*} \)~being Reedy-fibrant
is equivalent
to the map~\(\smash{
	\ssetX{n}
	=
	\ssetX[powering=\simp{n}]
	\to
	\ssetX[powering=\simp{n}[boundary]]
}\)
being a fibration in~\( \catC \) for all~\( n \).

%\begin{lemma}
%	Suppose \( \catC \)~is a combinatorial model category and~\( \Gamma \)
%	a category, and let~\(
%		\mF
%		\in
%		\catC[
%			diag=\catdelta[op],
%			reedy,
%			spar,
%			diag=\catGamma,
%			proj,
%		]
%	\)
%	be fibrant.
%\end{lemma}

\begin{theorem}[Theorem (Bousfield--Kan formula)]\label{res:bousfield_kan}
	Suppose \( \catC \)~is a combinatorial model category,~\( \Gamma \)
	a category, and~\( \mF \in \catC[diag=\catGamma] \).
	Let~\(
		\fibrep
		\colon
		\catC
		\to
		\catC[diag=\catdelta[op],smash]
	\)
	be a functor that takes~\( \vc \in \catC \)
	to a Reedy-fibrant replacement of the constant \( \catdelta[op] \)-diagram at~\( \vc \).
	Let furthermore~\( \ssetK \in \catsset[diag=\catGamma,proj,smash] \)
	be a projectively cofibrant resolution of the point.
	Then
	\[
		{\textstyle\invholim[\catGamma]{ \mF }}
		=
		\End[\vgamma\in\catGamma]{
			\fibrep[output=simplicial]{\mF{\vgamma}}[
				powering={\ssetK[arg=\vgamma]},
			]
		}.
	\]
\end{theorem}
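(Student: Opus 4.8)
The plan is to recognise the right-hand side as the homotopy end \( \End[rder,\catGamma]{\mF} \) and invoke~\cref{res:homotopy_limits_via_ends}. Consider the bifunctor \( \mH\colon\catGamma[op]\times\catGamma\to\catC \) given by \( \mH{\vgamma,\vgamma[prime]}=\fibrep[output=simplicial]{\mF{\vgamma[prime]}}[powering={\ssetK[arg=\vgamma]}] \): it is contravariant in \( \vgamma \) since powering is contravariant in the simplicial-set slot, and covariant in \( \vgamma[prime] \) since \( \fibrep \) and \( \mF \) are, and its end along the diagonal is precisely the right-hand side of the formula. Regarding \( \mF \) as the bifunctor constant in the first variable, for which \( \End[\catGamma]{\mF}=\invlim[\catGamma]{\mF} \), it therefore suffices to show that \( \mH \) is a fibrant replacement of \( \mF \) in the model structure \( \catC[diag=\catGamma,proj,spar,diag=\catGamma[op],inj] \); for that structure \( \End[\catGamma] \) is right Quillen by~\cref{res:end_functor_quillen_gamma_gammaop}, so by~\cref{res:homotopy_limits_via_ends} this identifies \( \End[\catGamma]{\mH} \) with \( \invholim[\catGamma]{\mF} \).

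First I would build the weak equivalence \( \mF\to\mH \). On components, for fixed \( \vgamma,\vgamma[prime] \), it is the composite
\[
	\mF{\vgamma[prime]}
	\longto
	\fibrep[output=simplicial]{\mF{\vgamma[prime]}}[powering={\simp{0}}]
	\longto
	\fibrep[output=simplicial]{\mF{\vgamma[prime]}}[powering={\ssetK[arg=\vgamma]}]
\]
of the level-zero component of the Reedy weak equivalence from the constant \( \catdelta[op] \)-diagram at \( \mF{\vgamma[prime]} \), with the map induced by \( \ssetK[arg=\vgamma]\to\simp{0} \). The first arrow is a weak equivalence because Reedy weak equivalences are componentwise. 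The second is as well: \( \ssetK \) resolves the terminal diagram, so \( \ssetK[arg=\vgamma]\to\simp{0} \) is a weak equivalence of (automatically cofibrant) simplicial sets; and \( \fibrep{\mF{\vgamma[prime]}} \) is a \emph{simplicial frame} — it is Reedy-fibrant, and two-out-of-three applied to its structure maps shows it is levelwise weakly equivalent to a constant diagram — so that the powering functor \( \catsset[op]\to\catC \) against it is right Quillen and hence preserves weak equivalences of simplicial sets. Naturality of this composite in both variables is routine, so \( \mF\to\mH \) is a componentwise weak equivalence.

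For fibrancy, being fibrant in \( \catC[diag=\catGamma,proj,spar,diag=\catGamma[op],inj] \) means exactly that for each \( \vgamma[prime]\in\catGamma \) the \( \catGamma[op] \)-diagram \( \vgamma\mapsto\mH{\vgamma,\vgamma[prime]} \) is injectively fibrant in \( \catC[diag=\catGamma[op],inj] \), since projective fibrancy of a diagram valued in a model category is objectwise fibrancy. Fix \( \vgamma[prime] \) and put \( Y=\fibrep{\mF{\vgamma[prime]}} \). By~\cref{res:existence_of_proj_inj_model_structures}, applied with \( \catsset \) in place of \( \catC \), the cofibration \( \varnothing\to\ssetK \) is a retract of a transfinite composite of pushouts of the simple projective cofibrations \( \Coprod[\catGamma{\vgamma[0],\slot}]{\simp{n}[boundary]}\to\Coprod[\catGamma{\vgamma[0],\slot}]{\simp{n}} \), where \( \simp{n}[boundary]\to\simp{n} \) ranges over the generating cofibrations of \( \catsset \); and the contravariant functor \( \ssetK\mapsto\bigl(\vgamma\mapsto Y^{\ssetK[arg=\vgamma]}\bigr) \) turns these colimits in \( \catsset[diag=\catGamma] \) into limits in \( \catC[diag=\catGamma[op]] \), powering being continuous in the simplicial-set slot. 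Since injective fibrations are stable under retracts, pullbacks, and transfinite inverse limits, it therefore suffices to see, for each \( \vgamma[0] \) and \( n \), that \( Y^{\,\Coprod[\catGamma{\vgamma[0],\slot}]{\simp{n}}}\to Y^{\,\Coprod[\catGamma{\vgamma[0],\slot}]{\simp{n}[boundary]}} \) is an injective fibration over \( \catGamma[op] \). But powering converts a coproduct \( \coprod_{S} \) in the exponent into the product \( \prod_{S} \), so this map is \( \Prod[{\catGamma[op]{\slot,\vgamma[0]}}]{\bigl(Y^{\simp{n}}\to Y^{\simp{n}[boundary]}\bigr)} \), i.e.\ a simple injective fibration over \( \catGamma[op] \) in the sense introduced above, because \( Y^{\simp{n}}\to Y^{\simp{n}[boundary]} \) is the matching map of \( Y \) at \( \ordset{n} \) and hence a fibration by Reedy-fibrancy of \( Y \), as recalled before the statement. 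Thus \( \mH \) is fibrant, and the proof is complete.

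The step I expect to be the main obstacle — though it is essentially bookkeeping, given what precedes — is the interface between Reedy-fibrancy and the simplicial-set variable: that a Reedy-fibrant simplicial object which is levelwise weakly equivalent to a constant diagram is a simplicial frame, so that powering against it is a right Quillen functor \( \catsset[op]\to\catC \) (used in the second paragraph for the weak equivalence) and in particular sends monomorphisms of simplicial sets to fibrations (used in the third for fibrancy). This is the classical theory of simplicial frames, and in the present set-up it is exactly the matching-object computation recorded immediately before~\cref{res:bousfield_kan}; nothing genuinely new is needed beyond assembling these ingredients.
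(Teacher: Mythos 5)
Your proof is correct and takes essentially the same route as the paper's: both exhibit the integrand as a fibrant replacement of \( F \) for the mixed projective/injective model structure on bifunctors, and both reduce, via the simple generating (trivial) projective cofibrations of \( \mathsf{SSet}^{\Gamma} \) and the conversion of coproducts in the exponent into products, to the matching-map computation \( Y^{\Delta^n}\to Y^{\partial\Delta^n} \) for the fibrancy half together with the frame property --- the paper's \cref{res:hovey_lemma} fed into Ken Brown's lemma --- for the weak-equivalence half. The only difference is packaging: the paper bundles both halves into the single claim that \( K\mapsto R(F)^{K} \) is right Quillen on the opposite of \( \mathsf{SSet}^{\Gamma}_{\mathrm{proj}} \), whereas you verify fibrancy and the weak equivalence separately, and your fibrancy check additionally invokes the commutation of the iterated projective and injective structures so that fibrancy can be tested one object of \( \Gamma \) at a time --- a true but not entirely trivial bookkeeping fact that the paper's formulation sidesteps by working with simple injective fibrations valued in \( \mathcal{C}^{\Gamma}_{\mathrm{proj}} \) rather than in \( \mathcal{C} \).
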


One may prove
\parencite[see e.g.][Proposition~14.8.9]{hir}
that the diagram~\( \ssetK[arg=\slot] = \nerve{{ \catGamma[mod=\slot] }} \in \catsset[diag=\catGamma,proj,smash] \),
taking~\( \vgamma \)
to the nerve~\( \nerve{{ \catGamma[mod=\vgamma] }} \)
of the comma category~\( \catGamma[mod=\vgamma] \) of all maps in~\( \catGamma \) with codomain~\( \vgamma \),
is a projectively cofibrant resolution of the point. Thus we have
\begin{equation}\label{eq:classical_bousfield_kan}
	{\textstyle\invholim[\catGamma]{ \mF }}
	=
	\End[\vgamma\in\catGamma]{
		\fibrep[output=simplicial]{\mF{\vgamma}}[
			powering={\nerve{{ \catGamma[mod=\vgamma] }}},
		]
	},
\end{equation}
which is the classical form of the Bousfield--Kan formula.

%\Cref{res:bousfield_kan}
%is an immediate corollary to a more general
%result:
%
%\begin{proposition}
%	Let the assumptions be as in
%	\cref{res:bousfield_kan}.
%	Then the functor
%	\[
%		\catsset[diag=\catGamma,proj,spar=\big,op]
%		\longto
%		\catC[diag=\catGamma,proj,spar=\big,smash,diag=\catGamma[op],inj],
%		\qquad
%		\ssetK[arg=\slot]
%		\longmapsto
%		\fibrep{ \mF{slot} }[powering={\ssetK[arg=\slot]}]
%		,
%	\]
%\end{proposition}

The proof relies on the following standard lemma:

\begin{lemmabreak}[{{Lemma \parencite[Proposition~3.6.8]{hovey}}}]\label{res:hovey_lemma}
	Let~\( \catC \) be a model category
	and \( \mF \colon \catsset \to \catC \) a functor preserving colimits and cofibrations. Then \( \mF \)~preserves trivial cofibrations if and only if \( \smash{ \mF{\simp{n}} \to \mF{\simp{0}} } \)~is a weak equivalence for all~\( n \).
\end{lemmabreak}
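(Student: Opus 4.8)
The plan is to prove the nontrivial ``if'' direction by reducing, through the small object argument in \( \catsset \), to a statement about the generating trivial cofibrations, and then to settle that statement by induction on dimension. The ``only if'' direction is immediate: for each \( n \), the inclusion \( \simp{0}\hookrightarrow\simp{n} \) of a vertex is a monomorphism and a weak equivalence, hence a trivial cofibration in \( \catsset \); thus \( \mF \) sends it to a trivial cofibration, in particular to a weak equivalence, and since its composite with \( \simp{n}\to\simp{0} \) is the identity, two-out-of-three forces \( \mF{\simp{n}}\to\mF{\simp{0}} \) to be a weak equivalence too. Conversely, assume each \( \mF{\simp{n}}\to\mF{\simp{0}} \) is a weak equivalence. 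Two remarks will be used throughout: since the cofibrations of \( \catsset \) are precisely the monomorphisms and \( \mF \) preserves cofibrations, \( \mF \) automatically carries every monomorphism of simplicial sets to a cofibration in \( \catC \), so it remains only to show that \( \mF \) sends trivial cofibrations to weak equivalences; and running the computation above in reverse, \( \mF \) applied to \emph{any} vertex inclusion \( \simp{0}\to\simp{n} \) is a weak equivalence.

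I would first reduce to the generating trivial cofibrations. The category \( \catsset \) is cofibrantly generated, with generating trivial cofibrations the horn inclusions \( J = \{\, \Lambda^{n}_{k}\hookrightarrow\simp{n} \,\} \), and \( \mF \) preserves all colimits. So, once we know \( \mF \) carries each horn inclusion to a trivial cofibration in \( \catC \), it will follow that \( \mF \) carries every relative \( J \)-cell complex to a transfinite composite of pushouts of trivial cofibrations, hence to a trivial cofibration (in any model category the trivial cofibrations are closed under pushout, transfinite composition, and retract). And an arbitrary trivial cofibration \( f \) of simplicial sets factors, by the small object argument, as a relative \( J \)-cell complex followed by a Kan fibration; that Kan fibration is a weak equivalence by two-out-of-three, hence a trivial fibration, so \( f \), being a monomorphism, lifts against it and is thereby a retract of the cell complex. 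Applying \( \mF \) and using closure under retracts, \( \mF{f} \) is a trivial cofibration. This reduces the whole proof to showing that \( \mF \) applied to each horn inclusion \( \Lambda^{n}_{k}\hookrightarrow\simp{n} \) is a trivial cofibration.

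I would prove this by induction on \( n \), in the slightly strengthened form: for every subcomplex \( K\subseteq\simp{n} \) that is the union of at least one, but not all, of the \( n+1 \) faces of \( \simp{n} \), the map \( \mF \) applied to \( K\hookrightarrow\simp{n} \) is a trivial cofibration. It is a cofibration by the first remark. For the weak-equivalence half, choose a vertex \( v \) lying in \( K \); then \( \mF \) applied to the composite \( \{v\}\hookrightarrow K\hookrightarrow\simp{n} \) is a weak equivalence by the second remark, so by two-out-of-three it is enough that \( \mF \) applied to \( \{v\}\hookrightarrow K \) be a weak equivalence. Build \( K \) out of \( \{v\} \) by adjoining its faces --- each a copy of \( \simp{n-1} \) --- one at a time, in a suitable order. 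The first step is a vertex inclusion \( \{v\}\hookrightarrow\simp{n-1} \), which \( \mF \) sends to a trivial cofibration by the second remark in dimension \( n-1 \). Each later step glues on the next face along its intersection with the subcomplex assembled so far, i.e.\ is a pushout of the inclusion of that intersection into a copy of \( \simp{n-1} \); the intersection is itself a union of faces of this \( \simp{n-1} \), nonempty since distinct faces of \( \simp{n} \) meet whenever \( n\geq2 \), and not all of them since it uses strictly fewer faces than \( \simp{n-1} \) has --- so by the inductive hypothesis \( \mF \) sends each attaching map to a trivial cofibration. Hence \( \mF \) applied to \( \{v\}\hookrightarrow K \), a finite composite of pushouts of trivial cofibrations, is a trivial cofibration. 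The base case \( n=1 \) is the second remark, and as the horns \( \Lambda^{n}_{k} \) are exactly the maximal subcomplexes of this kind, the induction concludes the proof.

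The genuinely delicate point --- and essentially the only place where simplicial combinatorics is used --- is the choice of ordering in the last paragraph: one must add the faces of \( \simp{n} \) in an order such that, at every stage, the intersection of the next face with the union of the earlier ones is a \emph{proper} union of faces of that face, never all of \( \partial\simp{n-1} \); for the full boundary the inductive hypothesis genuinely fails, since \( \mF \) applied to \( \partial\simp{n}\hookrightarrow\simp{n} \) need not be a weak equivalence, as the case \( \mF=\id \) already shows. This ordering is exactly the classical combinatorial ingredient underlying the fact that the saturated class of anodyne extensions is generated by the horn inclusions.
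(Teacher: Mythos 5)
The paper does not actually prove this lemma; it is imported wholesale from Hovey (Proposition~3.6.8), so there is no in‑text argument to compare yours against. That said, your proof is correct and complete, and it follows the same standard route as Hovey's: reduce via the retract/small‑object argument to the generating trivial cofibrations $\Lambda^{n}_{k}\hookrightarrow\Delta^{n}$, and then show by induction on $n$ that $F$ sends the inclusion of any proper nonempty union of facets of $\Delta^{n}$ to a trivial cofibration, using two‑out‑of‑three against the vertex inclusions (whose images are weak equivalences because $F(\Delta^{n})\to F(\Delta^{0})$ is one and $F$ preserves the retraction). One small remark: the ordering of the facets in your gluing argument is less delicate than you suggest. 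Since the subcomplex $K$ omits at least one of the $n+1$ facets of $\Delta^{n}$, at every stage the next facet meets the previously assembled union in at most $n-1$ of its $n$ facets, so the intersection is automatically a \emph{proper} (and, for $n\ge 2$, nonempty) union of facets no matter how you order them; the only genuine constraint is that the first facet contain the chosen basepoint vertex. The case you rightly must avoid, $\partial\Delta^{n}\hookrightarrow\Delta^{n}$, simply never arises because $K\neq\partial\Delta^{n}$ by hypothesis.
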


\begin{proof}[Proof of~\cref{res:bousfield_kan}]
	Clearly,
	\( \fibrep{ \mF{slot} }[ibullet] \)~is a fibrant replacement
	of~\( \mF \)
	with respect to the model structure~\(
		\catC[
			diag=\catdelta[op],
			reedy,
			spar,
			diag=\catGamma,
			proj,
			smash,
		]
	\).
	The theorem will follow if we prove that
	\( \fibrep{ \mF{slot} }[powering={\ssetK[arg=\slot]}] \)~is
	a fibrant replacements of~\( \mF \)
	in~\(
		\catC[
			diag=\catGamma,
			proj,
			spar,
			diag=\catGamma[op],
			inj,
			smash,
		]
	\).
	This will follow from Ken Brown's Lemma if we prove
	that the continuous functor
	\[
		\catsset[diag=\catGamma,proj,spar=\big,op]
		\longto
		\catC[diag=\catGamma,proj,spar=\big,smash,diag=\catGamma[op],inj],
		\qquad
		\ssetK[arg=\slot]
		\longmapsto
		\fibrep{ \mF{slot} }[powering={\ssetK[arg=\slot]}]
		,
	\]
	takes opposites of (trivial) cofibrations to (trivial) fibrations.
	(Trivial) cofibrations
	in~\( \catsset[diag=\catGamma,proj,smash] \)
	are generated from \emph{simple} (trivial) projective
	cofibrations via pushouts and retracts, c.f.~\cref{res:existence_of_proj_inj_model_structures}. Thus by continuity of the functor,
	it suffices to prove the statement for \emph{simple} (trivial) cofibrations.
	We therefore let \(
		\Coprod[\catGamma{\vgamma[0],slot}]{ \ssetK }
		\into
		\Coprod[\catGamma{\vgamma[0],slot}]{ \ssetL }
	\)~be one such, where \( \ssetK\into\ssetL \)
	is a (trivial) cofibration and~\( \vgamma[0] \in \catGamma \).
	This is mapped to
	\[\textstyle
		\Prod[\catGamma{\vgamma[0],slot}]{
			\fibrep{\mF{slot}}[powering=\ssetL]
		}
		\to
		\Prod[\catGamma{\vgamma[0],slot}]{
			\fibrep{\mF{slot}}[powering=\ssetK]
		}.
	\]
	Thus we must show that the composition
	\[
		\catsset[op]
		\xrightarrow{\Prod[\catGamma{\vgamma[0],slot}]}
		\catsset[diag=\catGamma[op],proj,spar=\big,op]
		\longto
		\catC[diag=\catGamma,proj,spar=\big,smash,diag=\catGamma[op],inj],
		\qquad
		\textstyle
		\ssetK
		\longmapsto
		\Prod[\catGamma{\vgamma[0],slot}]{
			\fibrep{ \mF{slot} }[powering={\ssetK}]
		},
	\]
	takes (trivial) cofibrations to (trivial) fibrations.
	Checking that it takes cofibrations to fibrations amounts to checking this for the generating cofibrations~\( \simp{n}[boundary]\into\simp{n} \) in~\( \catsset \). This holds by the assumption that \( \fibrep{\mF{slot}}[ibullet] \) is componentwise Reedy-fibrant.
	Since the functor takes colimits to limits,
	the claim now follows from the (dual of) the lemma.
%%	
%	The simple cofibrations have the form~\(
%		\Coprod[\catGamma{slot,\vgamma[0]}]{ \simp{n}[boundary] }
%		\into
%		\Coprod[\catGamma{slot,\vgamma[0]}]{ \simp{n} }
%	\)
%	for~\( \vgamma[0]\in\catGamma \). These are mapped to
%	\[\textstyle
%		\Prod[\catGamma{slot,\vgamma[0]}]{
%			\fibrep{\mF{slot}}[powering=\simp{n}]
%		}
%		\to
%		\Prod[\catGamma{slot,\vgamma[0]}]{
%			\fibrep{\mF{slot}}[powering=\simp{n}[boundary]]
%		}
%	,
%	\]
%	which are simple fibrations
%	in~\(
%		\catC[diag=\catGamma,proj,spar,diag=\catGamma[op],inj,smash]
%	\).
%	
%	The simple trivial cofibrations
%	have the form~\(
%		\Coprod[\catGamma{slot,\vgamma[0]}]{ \horn[i]{n} }
%		\into
%		\Coprod[\catGamma{slot,\vgamma[0]}]{ \simp{n} }
%	\),
%	where \( \horn[i]{n}\into\simp{n} \)~is the inclusion of the \( i \)th horn.
\end{proof}

\chapter{Homotopy-initial functors}

A functor~\( \mf \colon \catGamma \to \catGamma[prime] \)
is called \textdef{homotopy-initial} if for all objects~\( \vgamma[prime] \in \catGamma[prime] \),
the nerve~\( \nerve{{ \mf[over=\vgamma[prime]] }} \)
is contractible as a simplicial set;
here \( \mf[over=\vgamma[prime]] \) denotes the comma category whose objects are pairs~\( \tup{ \vgamma , \malpha } \)
where \( \malpha \) is a
map~\( \mf{ \vgamma } \to \vgamma[prime] \).
A morphism~\(
	\tup{ {\vgamma[1]} , \malpha }
	\to
	\tup{ {\vgamma[2]} , \malpha }
\)
is a morphism~\( \vgamma[1] \to \vgamma[2] \)
in~\( \catGamma \) making the diagram
\[\begin{tikzcd}[row sep=0em]
	\mf{\vgamma[1]}
	\ar[dd] \ar[rd]
\\
		& \vgamma[prime]
\\
	\mf{\vgamma[2]} \ar[ru]
\end{tikzcd}\]
commute. We aim to reprove the statement
\begin{theorembreak}[{Theorem \parencite[Theorem~19.6.7]{hir}}]\label{res:homotopy_initial}
	Suppose \( \catC \)~is a combinatorial model category
	and~\( \catGamma, \catGamma[prime] \) two categories.
	If \( \mf \colon \catGamma \to \catGamma[prime] \)~is
	homotopy-initial, then we have
	\[\textstyle
		\invholim[\catGamma[prime]]{\mF}
		=
		\invholim[\catGamma]{{\mf[*res]{\mF}}}
	\]
	for all~\( \mF \in \catC[diag=\catGamma[prime]] \).
\end{theorembreak}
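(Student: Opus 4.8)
The plan is to deduce this from the Bousfield--Kan formula \cref{res:bousfield_kan} by exhibiting \( \invholim[\catGamma[prime]]{ \mF } \) and \( \invholim[\catGamma]{ \mf[*res]{\mF} } \) as one and the same weighted limit. Fix once and for all a functorial Reedy-fibrant replacement \( \fibrep \) of constant \( \catdelta[op] \)-diagrams, as in \cref{res:bousfield_kan}. Applying the classical form \eqref{eq:classical_bousfield_kan} to the restricted diagram \( \mf[*res]{\mF}\in\catC[diag=\catGamma] \) gives
\[
	{\textstyle\invholim[\catGamma]{ \mf[*res]{\mF} }}
	=
	\End[\vgamma\in\catGamma]{
		\fibrep[output=simplicial]{ \mF{\mf{\vgamma}} }[
			powering={ \nerve{{ \catGamma[commacat=\vgamma] }} }
		]
	}
	.
\]
Because \( \fibrep \) and the powering act objectwise, the right-hand side is exactly the weighted limit over \( \catGamma \), with weight the projectively cofibrant diagram \( \nerve{{ \catGamma[commacat=\slot] }}\in\catsset[diag=\catGamma] \) (cf.\ the discussion after \eqref{eq:classical_bousfield_kan}), of the restriction along \( \mf \) of the \( \catGamma[prime] \)-diagram \( \vgamma[prime]\mapsto\fibrep[output=simplicial]{ \mF{\vgamma[prime]} } \), valued in the complete category \( \catC[diag=\catdelta[op]] \) and powered over \( \catsset \) via the powering of the previous section.

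The first real step is the adjunction between weighted limits and left Kan extensions: for a weight \( \ssetK\in\catsset[diag=\catGamma] \) and a \( \catGamma[prime] \)-diagram \( Y \) valued in a category powered over \( \catsset \), one has \( \{\ssetK,\mf[*res]{Y}\}_{\catGamma} = \{\mf[!kan]{\ssetK},Y\}_{\catGamma[prime]} \); unwinding the defining ends, this is just the adjunction \( \mf[!kan]\dashv\mf[*res] \) of \cref{res:Kan_extensions_Quillen_adjunctions} applied one variable at a time. Hence the right-hand side above equals the end over \( \catGamma[prime] \) of \( \vgamma[prime]\mapsto\fibrep[output=simplicial]{ \mF{\vgamma[prime]} } \), powered objectwise by the diagram \( \mf[!kan]{ \nerve{{ \catGamma[commacat=\slot] }} }\in\catsset[diag=\catGamma[prime]] \).

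Next I would establish the combinatorial identity \( \mf[!kan]{ \nerve{{ \catGamma[commacat=\slot] }} } = \nerve{{ \mf[over=\slot] }} \) in \( \catsset[diag=\catGamma[prime]] \). This I would check degreewise: an \( n \)-simplex of \( \nerve{{ \mf[over=\vgamma[prime]] }} \) is a chain \( \vgamma[0]\to\cdots\to\vgamma[n] \) in \( \catGamma \) together with a single map \( \mf{\vgamma[n]}\to\vgamma[prime] \) (the remaining structure maps being forced by composition), so the set of \( n \)-simplices is \( \coprod_{\sigma}\catGamma[prime](\mf{\vgamma[n]},\vgamma[prime]) \) with \( \sigma \) ranging over the chains of length \( n \) and \( \vgamma[n] \) its last vertex; the co-Yoneda lemma over \( \catGamma \) then identifies this with the degree-\( n \) part of the coend \( \mf[!kan]{ \nerve{{ \catGamma[commacat=\slot] }} }{ \vgamma[prime] } = \Coend[\vgamma\in\catGamma]{ \catGamma[prime](\mf{\vgamma},\vgamma[prime])\tens\nerve{{ \catGamma[commacat=\vgamma] }} } \), and naturality in \( [n] \) and \( \vgamma[prime] \) is routine. (Alternatively this can be extracted from \textcite{hir}.) Since \( \mf[!kan]\colon\catsset[diag=\catGamma]\to\catsset[diag=\catGamma[prime]] \) is left Quillen for the projective model structures by \cref{res:Kan_extensions_Quillen_adjunctions} and \( \nerve{{ \catGamma[commacat=\slot] }} \) is projectively cofibrant, the diagram \( \nerve{{ \mf[over=\slot] }} \) is projectively cofibrant as well.

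Finally, the homotopy-initiality of \( \mf \) says precisely that each \( \nerve{{ \mf[over=\vgamma[prime]] }} \) is contractible, i.e.\ that \( \nerve{{ \mf[over=\slot] }}\to * \) is a componentwise weak equivalence; with the previous paragraph, \( \nerve{{ \mf[over=\slot] }} \) is therefore a projectively cofibrant resolution of the point over \( \catGamma[prime] \), hence an admissible weight in \cref{res:bousfield_kan}, which gives
\[
	\End[\vgamma[prime]\in\catGamma[prime]]{
		\fibrep[output=simplicial]{ \mF{\vgamma[prime]} }[
			powering={ \nerve{{ \mf[over=\vgamma[prime]] }} }
		]
	}
	=
	{\textstyle\invholim[\catGamma[prime]]{ \mF }}
	.
\]
Combining the two displayed equalities with the intermediate identifications yields \( \invholim[\catGamma]{ \mf[*res]{\mF} } = \invholim[\catGamma[prime]]{ \mF } \), as claimed. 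I expect the main obstacle to be the third step — the identity \( \mf[!kan]{ \nerve{{ \catGamma[commacat=\slot] }} } = \nerve{{ \mf[over=\slot] }} \) together with the adjunction step that precedes it: these are the only points where something has to be verified rather than quoted, and each comes down to careful bookkeeping of variances, co-Yoneda, and the observation that \( \fibrep \), the powering, and all the (co)ends in play commute with one another because they are computed objectwise. Everything else is a direct application of results established above.
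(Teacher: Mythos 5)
Your proposal is correct and follows essentially the same route as the paper: the two steps you single out as requiring verification — the identification $\mf[!kan]{ \nerve{{ \catGamma[commacat=\slot] }} } = \nerve{{ \mf[over=\slot] }}$ with its cofibrancy consequence, and the passage from the weighted limit over $\catGamma$ to the weighted limit over $\catGamma[prime]$ via the $\mf[!kan]\dashv\mf[*res]$ adjunction (co-Yoneda plus Fubini for ends) — are exactly the paper's \cref{res:kan_extension_nerve} and \cref{res:change_of_diagrams}, after which the conclusion is the same two applications of \cref{res:bousfield_kan}.
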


This relies on a few technical lemmas:

\begin{lemma}\label{res:kan_extension_nerve}
	If \( \mf \colon \catGamma \to \catGamma[prime] \)
	is a functor, then~\(
		\mf[!kan]{ \nerve{{ \catGamma[over=\slot] }}}
		=
		\nerve{{\mf[over=\slot]}}
		\in
		\catsset[diag=\catGamma[prime]]
	\).
	In particular,
	since \(
		\nerve{{ \catGamma[over=\slot] }}
		\in
		\catsset[diag=\catGamma,proj,smash]
	\)
	is cofibrant,
	\(
		\nerve{{ \mf[over=\slot] }}
		\in
		\catsset[diag=\catGamma[prime],proj,smash]
	\)
	is cofibrant
	by \cref{res:Kan_extensions_Quillen_adjunctions}.
\end{lemma}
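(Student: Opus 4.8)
The plan is to write down an explicit comparison map and check it is an isomorphism one simplicial degree at a time. I would start from the canonical map
\[
	\mf[!kan]{ \nerve{{ \catGamma[over=\slot] }} }
	\longto
	\nerve{{ \mf[over=\slot] }}
\]
adjoint, under $\mf[!kan]\dashv\mf[*res]$, to the natural transformation $\nerve{{ \catGamma[over=\slot] }}\to\nerve{{ \mf[over=\mf{\slot}] }}=\mf[*res]{ \nerve{{ \mf[over=\slot] }} }$ which for each $\vgamma\in\catGamma$ is induced by the functor $\catGamma[over=\vgamma]\to\mf[over=\mf{\vgamma}]$, $(\vgamma[0]\to\vgamma)\mapsto(\vgamma[0],\mf{\vgamma[0]}\to\mf{\vgamma})$. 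Since $\catsset$ is cocomplete, $\mf[!kan]$ on $\catsset[diag=\catGamma]$ is computed pointwise by the colimit formula~\eqref{eq:kan_extensions}, and colimits of simplicial sets are formed degreewise; so it suffices to show that this comparison map becomes an isomorphism after evaluating at each $\vgamma[prime]\in\catGamma[prime]$ and in each simplicial degree~$n$.

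The point is that in degree~$n$ the nerves are coproducts of corepresentables. An $n$-simplex of $\nerve{{ \catGamma[over=\vgamma] }}$ is a chain $\vgamma[0]\to\cdots\to\vgamma[n]$ in $\catGamma$ equipped with a cocone to~$\vgamma$, and a cocone over a chain is nothing but a morphism out of its last object; so, as functors $\catGamma\to\catset$,
\[
	\bigl( \nerve{{ \catGamma[over=\slot] }} \bigr)_{n}
	\;\cong\;
	\Coprod[{\vgamma[0]\to\cdots\to\vgamma[n]}]{ \catGamma{ \vgamma[n] , \slot } },
\]
the coproduct ranging over $n$-chains in~$\catGamma$. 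As a left adjoint, $\mf[!kan]$ preserves coproducts and commutes with evaluation in each degree, and it carries the corepresentable $\catGamma{\vgamma[0],\slot}$ to $\catGamma[prime]{ \mf{\vgamma[0]} ,\slot }$ (two applications of the Yoneda lemma, or directly from~\eqref{eq:kan_extensions}); hence $\bigl( \mf[!kan]{ \nerve{{ \catGamma[over=\slot] }} } \bigr)_{n}\cong\Coprod[{\vgamma[0]\to\cdots\to\vgamma[n]}]{ \catGamma[prime]{ \mf{\vgamma[n]} ,\slot } }$. On the other hand, an $n$-simplex of $\nerve{{ \mf[over=\vgamma[prime]] }}$ is an $n$-chain in $\mf[over=\vgamma[prime]]$, i.e.\ an $n$-chain $\vgamma[0]\to\cdots\to\vgamma[n]$ in $\catGamma$ together with a compatible family of maps $\mf{\vgamma[i]}\to\vgamma[prime]$ — which, the shape being a chain, is pinned down by the single map $\mf{\vgamma[n]}\to\vgamma[prime]$; so $\bigl( \nerve{{ \mf[over=\slot] }} \bigr)_{n}$ is again $\Coprod[{\vgamma[0]\to\cdots\to\vgamma[n]}]{ \catGamma[prime]{ \mf{\vgamma[n]} ,\slot } }$.

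To finish I would trace the adjunction and check that the comparison map is exactly this identification degree by degree: its degree-$n$ component at $\vgamma[prime]$ is the canonical map
\[
	\dirlim[{(\mf{\vgamma}\to\vgamma[prime])\in\mf[over=\vgamma[prime]]}]{ \bigl( \nerve{{ \catGamma[over=\vgamma] }} \bigr)_{n} }
	\longto
	\bigl( \nerve{{ \mf[over=\vgamma[prime]] }} \bigr)_{n} ,
\]
and, one $n$-chain at a time, this is the map $\dirlim[\mf{\vgamma}\to\vgamma[prime]]{ \catGamma{ \vgamma[n] ,\vgamma } }\to\catGamma[prime]{ \mf{\vgamma[n]} ,\vgamma[prime] }$, $[\vgamma,\phi,\psi]\mapsto\phi\circ\mf{\psi}$, which is precisely the universal cocone exhibiting $\catGamma[prime]{ \mf{\vgamma[n]} ,\vgamma[prime] }$ as that colimit. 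Thus the comparison map is a degreewise, hence an honest, isomorphism, giving $\mf[!kan]{ \nerve{{ \catGamma[over=\slot] }} }=\nerve{{ \mf[over=\slot] }}$. Matching the degreewise isomorphisms with the global comparison map (equivalently, seeing that the universal cocones line up as stated) is the one step that needs genuine care; the rest is formal.

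For the stated consequence, recall that $\nerve{{ \catGamma[over=\slot] }}$ is projectively cofibrant in $\catsset[diag=\catGamma,proj,smash]$ — this is standard, it being the projectively cofibrant resolution of the point recalled in the Bousfield--Kan section, cf.~\parencite[Proposition~14.8.9]{hir} — while $\mf[!kan]\colon\catsset[diag=\catGamma,proj]\to\catsset[diag=\catGamma[prime],proj]$ is left Quillen by~\cref{res:Kan_extensions_Quillen_adjunctions} and hence preserves cofibrant objects; so $\nerve{{ \mf[over=\slot] }}=\mf[!kan]{ \nerve{{ \catGamma[over=\slot] }} }$ is cofibrant.
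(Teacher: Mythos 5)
Your proof is correct and takes essentially the same route as the paper's: reduce to the pointwise/degreewise colimit formula for the left Kan extension and then verify the resulting identity of sets, \( \smash{\dirlim[\mf{\vgamma}\to\vgamma[prime]]{\nerve{{\catGamma[over=\vgamma]}}{n}} = \nerve{{\mf[over=\vgamma[prime]]}}{n}} \). The paper records this identity as a bare observation, while you justify it by decomposing the nerves degreewise into coproducts of corepresentables and using that \( \mf[!kan] \) sends \( \catGamma{\vgamma[n],\slot} \) to \( \catGamma[prime]{\mf{\vgamma[n]},\slot} \); the substance is the same, just spelled out.
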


\begin{proof}
	Since colimits in diagram categories
	over cocomplete categories can be checked componentwise,
	this boils down to the observation
	\[
		\dirlim[ \mf{\vgamma} \to \vgamma[prime] ]{
			\nerve{{ \catGamma[over=\vgamma] }}{n}
		}
		=
		\nerve{{ \mf[over=\vgamma[prime]] }}{n}
		.
	\]
\end{proof}

The following lemma is inspired
by \textcite[Proposition~19.6.6]{hir}.
See also \textcite[Lemma~8.1.4]{riehl}.

\begin{lemmabreak}\label{res:change_of_diagrams}
	Suppose that \( \catC \)
	is a complete category and that \( \catGamma \)
	and~\( \catGamma[prime] \) are two categories
	with a functor~\( \mf \colon \catGamma \to \catGamma[prime] \).
	Then
	we have
	\[
		\End[\vgamma\in\catGamma]{
			\mF{\mf{\vgamma}}[powering=\nerve{{\catGamma[over=\vgamma]}}]
		}
		=
		\End[\vgamma[prime]\in\catGamma[prime]]{
			\mF{\vgamma[prime]}[powering={
				\nerve{{ \mf[over=\vgamma[prime]] }}
			}]
		}
	\]
	for~\( \mF \in \catC[diag=\catdelta[op],spar,smash,diag=\catGamma] \)
	(see the previous chapter for an explanation of the
	power notation).
\end{lemmabreak}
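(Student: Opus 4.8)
The plan is to establish the identity after applying the functor $\catC{A,\slot}$ for an arbitrary object $A\in\catC$. Both sides of the asserted equation are ends, hence limits, in $\catC$, so a bijection between the two corepresentable functors of $A$, natural in $A$, will by the Yoneda lemma upgrade to a canonical isomorphism between the two objects themselves. The only ingredient beyond formal manipulation is a description of how the powering interacts with $\catC{A,\slot}$: for a simplicial object $\ssetX{*}\in\catC[diag=\catdelta[op]]$, a simplicial set $\ssetK$, and $A\in\catC$, there is a bijection
\[
	\catC{ A , \ssetX[powering=\ssetK] }
	\;\cong\;
	\catsset{ \ssetK , \catC{A,\ssetX{*}} }
\]
natural in all three variables, where $\catC{A,\ssetX{*}}$ denotes the simplicial set $\ordset{n}\mapsto\catC{A,\ssetX{n}}$. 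This is immediate from the definition $\ssetX[powering=\ssetK]=\invlim[\simp{n}\to\ssetK]{\ssetX{n}}$, the limit running over the category of simplices of $\ssetK$: since $\catC{A,\slot}$ preserves limits and $\ssetK=\dirlim[\simp{n}\to\ssetK]{\simp{n}}$, applying $\catC{A,\slot}$ turns the defining limit into $\invlim[\simp{n}\to\ssetK]{\catC{A,\ssetX{n}}}=\catsset{\ssetK,\catC{A,\ssetX{*}}}$.

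Next I would fix $A$ and feed this in. Applying $\catC{A,\slot}$ to the right-hand side of the lemma, using that it commutes with ends, and invoking the displayed bijection pointwise with $\ssetX{*}=\mF{\vgamma[prime]}$ and $\ssetK=\nerve{{\mf[over=\vgamma[prime]]}}$, one obtains the end over $\catGamma[prime]$ of hom-sets of $\catsset$-valued diagrams, namely $\End[\vgamma[prime]\in\catGamma[prime]]{\catsset{\nerve{{\mf[over=\vgamma[prime]]}},\catC{A,\mF{\vgamma[prime]}}}}$; such an end is by definition the set of natural transformations, so this equals $\catsset[diag={\catGamma[prime]}]{\nerve{{\mf[over=\slot]}},\catC{A,\mF{\slot}}}$. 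The same computation applied to the left-hand side of the lemma — now with $\ssetX{*}=\mF{\mf{\vgamma}}$ and the end running over $\catGamma$ — yields $\catsset[diag=\catGamma]{\nerve{{\catGamma[over=\slot]}},\mf[*res]{\catC{A,\mF{\slot}}}}$, because restriction along $\mf$ carries the $\catGamma[prime]$-diagram $\vgamma[prime]\mapsto\catC{A,\mF{\vgamma[prime]}}$ to the $\catGamma$-diagram $\vgamma\mapsto\catC{A,\mF{\mf{\vgamma}}}$.

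The two are then matched by \cref{res:kan_extension_nerve} together with the left Kan extension/restriction adjunction. That lemma gives $\nerve{{\mf[over=\slot]}}=\mf[!kan]{\nerve{{\catGamma[over=\slot]}}}$ in $\catsset[diag={\catGamma[prime]}]$, and $\mf[!kan]$ is left adjoint to $\mf[*res]$ (the underlying adjunction of the Quillen adjunction of \cref{res:Kan_extensions_Quillen_adjunctions}), whence
\[
	\catsset[diag={\catGamma[prime]}]{ \nerve{{\mf[over=\slot]}} , \catC{A,\mF{\slot}} }
	=
	\catsset[diag={\catGamma[prime]}]{ \mf[!kan]{\nerve{{\catGamma[over=\slot]}}} , \catC{A,\mF{\slot}} }
	\cong
	\catsset[diag=\catGamma]{ \nerve{{\catGamma[over=\slot]}} , \mf[*res]{\catC{A,\mF{\slot}}} }.
\]
Chaining the three identifications gives, for every $A$, a bijection between $\catC{A,\slot}$ evaluated on the right-hand and left-hand sides of the lemma, natural in $A$; the Yoneda lemma then delivers the claimed equality in $\catC$.

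The step that needs genuine care is the powering–hom bijection above and, equally, the bookkeeping that licenses reading the two ends as sets of natural transformations: one must check that $\vgamma[prime]\mapsto\nerve{{\mf[over=\vgamma[prime]]}}$ and $\vgamma[prime]\mapsto\catC{A,\mF{\vgamma[prime]}}$ are genuine functors $\catGamma[prime]\to\catsset$ (and likewise over $\catGamma$), with the expected variances, so that the integrands are honest functors on $\catGamma[prime][op]\times\catGamma[prime]$ whose ends compute $\catsset[diag={\catGamma[prime]}]{\slot,\slot}$, and that the structure maps of the ends correspond under the bijection. Once that is pinned down the argument is purely the ``left Kan extension versus restriction'' adjunction and nothing more; in particular, nothing here uses cocompleteness of $\catC$, only completeness, in accordance with the hypothesis.
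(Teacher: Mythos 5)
Your argument is correct, and it reaches the conclusion by a recognizably different route from the one in the paper. The paper's proof is a direct end-calculus computation inside \( \catC \): it expands the powering as an end over \( \catdelta \), applies the co-Yoneda lemma to \( \mF{\mf{\vgamma}} \), uses the power--copower adjunction together with Fubini for ends to move a \( \catset \)-valued coend into the exponent, recognizes that coend as the explicit colimit formula for \( \mf[!kan] \), and only then invokes \cref{res:kan_extension_nerve}. You instead test both sides against an arbitrary object of \( \catC \), convert the powering into a simplicial hom-set, read the resulting ends as sets of natural transformations of \( \catsset \)-valued diagrams (in effect, as weighted limits with weights \( \nerve{{ \catGamma[over=\slot] }} \) and \( \nerve{{ \mf[over=\slot] }} \)), and conclude by the bare adjunction between \( \mf[!kan] \) and \( \mf[*res] \) together with \cref{res:kan_extension_nerve} and the Yoneda lemma. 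The two proofs hinge on the same two facts, namely the identification \( \mf[!kan]{ \nerve{{ \catGamma[over=\slot] }}} = \nerve{{\mf[over=\slot]}} \) and the universal property of the left Kan extension; but you use the latter as a black box where the paper unpacks it through its coend formula. What your version buys is that it makes manifest that all coends occur in \( \catset \) or \( \catsset \) rather than in \( \catC \), so only completeness of \( \catC \) is needed (a point the paper's computation also respects, though less visibly), and it dispenses with stating co-Yoneda and Fubini as separate ingredients; the cost is the extra layer of representables and the naturality bookkeeping you rightly flag as the delicate part.
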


\begin{proof}
	For the purpose of the proof, we recall
	that the Kan extension formulas
	in~\eqref{eq:kan_extensions} may
	be equivalently written in terms of (co)ends:
	\[
		\mf[!kan]{\mF}{\vgamma[prime]}
		=
		\Coend[\vgamma\in\catGamma]{
		\catGamma[prime]{\mf{\vgamma},\vgamma[prime]}
		\times
		\mF{\vgamma}
		}
		\qquad\text{and}\qquad
		\mf[*kan]{\mF}{\vgamma[prime]}
		=
		\End[\vgamma\in\catGamma]{
			\mF{\vgamma}[powering={\catGamma[prime]{\vgamma[prime],\mf{\vgamma}}}]
		}
		.
	\]
	Here we are using the natural copowering and powering of~\( \catset \) on~\( \catC \),
	given by~\( S \times \vc = \Coprod[S]{\vc} \)
	and~\( \vc[powering=S] = \Prod[S]{\vc} \)
	for~\( S \in \catset \) and~\( \vc \in \catC \),
	which make sense whenever \( \catC \)
	is complete resp.~cocomplete.
	We shall furthermore make use of the
	so-called
	\enquote{co-Yoneda lemma} which says that
	\[
		\mG{\mf{\vgamma}}
		=
		\End[\vgamma[prime]\in\catGamma[prime]]{
			\mG{\vgamma[prime]}[
				powering=\catGamma[prime]{\mf{\vgamma},\vgamma[prime]}
			]
		}
		\qquad
		\text{for all }
		\mG \in \catC[diag=\catGamma[prime]]
		.
	\]
	Finally, we use \enquote{Fubini's theorem} for ends,
	which says that ends, being limits, commute.
	This together yields
	\begin{align*}
		\End[\vgamma\in\catGamma]{
			\mF{\mf{\vgamma}}[powering=\nerve{{\catGamma[over=\vgamma]}}]
		}
		&=
		\End[\vgamma\in\catGamma]{
			\End[\ordset{n}\in\catdelta]{
				\mF{\mf{\vgamma}}[n,powering=\nerve{{\catGamma[over=\vgamma]}}{n}]
			}
		}
	\\
		&=
		\End[\vgamma\in\catGamma]{
			\End[\ordset{n}\in\catdelta]{
				\End[\vgamma[prime]\in\catGamma[prime]]{
					\mF{\vgamma[prime]}[
						n,
						powering=\catGamma[prime]{\mf{\vgamma},\vgamma[prime]},
						spar=\big,
						powering=\nerve{{\catGamma[over=\vgamma]}}{n},
					]
				}
			}
		}
\\
		&=
		\End[\vgamma\in\catGamma]{
			\End[\ordset{n}\in\catdelta]{
				\End[\vgamma[prime]\in\catGamma[prime]]{
					\mF{\vgamma[prime]}[
						n,
						powering={
							\catGamma[prime]{\mf{\vgamma},\vgamma[prime]}
							\times
							\nerve{{\catGamma[over=\vgamma]}}{n}
						},
					]
				}
			}
		}
\\
		&=
		\End[\ordset{n}\in\catdelta]{
			\End[\vgamma[prime]\in\catGamma[prime]]{
				\mF{\vgamma[prime]}[n,powering={
					\Coend[\vgamma\in\catGamma]{
						\catGamma[prime]{\mf{\vgamma},\vgamma[prime]}
						\times
						\nerve{{\catGamma[over=\vgamma]}}{n}
					}
				}]
			}
		}
\\
		&=
			\End[\vgamma[prime]\in\catGamma[prime]]{
				\mF{\vgamma[prime]}[n,powering={
					\mf[!kan]{{
						\nerve{{ \catGamma[over=\slot] }}[arg=\vgamma[prime]]
					}}
				}]
			}
		=
		\End[\vgamma[prime]\in\catGamma[prime]]{
			\mF{\vgamma[prime]}[powering={
				\nerve{{ \mf[over=\vgamma[prime]] }}
			}]
		}
	\end{align*}
	where the last equality sign is due
	to \cref{res:kan_extension_nerve}.
\end{proof}

\begin{proof}[Proof of~\cref{res:homotopy_initial}]
	\Cref{res:bousfield_kan} and equation~\eqref{eq:classical_bousfield_kan}
	show that
	\[
		{
			\textstyle
			\invholim[\catGamma]{ \mf[*res]{\mF} }
		}
		=
		\End[\vgamma[prime]\in\catGamma[prime]]{
			\fibrep{\mF{\vgamma[prime]}}[powering={
				\nerve{{ \mf[over=\vgamma[prime]] }}
			}]			
		}
		.
	\]
	Since \( \nerve{{ \mf[over=\vgamma[prime]] }} \)
	is contractible for all~\( \vgamma[prime]\),
	\( \nerve{{ \mf[over=\slot ] }} \)~is a projectively cofibrant
	resolution of the point
	by \cref{res:kan_extension_nerve}.
	Thus the \anrhs is
	exactly~\( \invholim[\catGamma[prime]]{\mF}\)
	by~\cref{res:bousfield_kan}.
%	\cref{res:hovey_lemma} shows
%	that we have a weak equivalence~\(
%		\fibrep{\mF{\vgamma[prime]}}[powering={
%			\nerve{{ \mf[over=\vgamma[prime]] }}
%		}]
%		\cong
%		\fibrep{\mF{\vgamma[prime]}}[powering={
%			*
%		}]
%		=
%		\fibrep{\mF{\vgamma[prime]}}[0]
%		\cong
%		\mF{\vgamma[prime]}
%	\).
%	Thus the \anrhs of the above equation is
\end{proof}

\begin{examplebreak}[Example: Fat totalization formula]\label{ex:fat_totalization}
	Recall from \cref{ex:delta_reedy} that the simplex category~\( \catdelta \)
	is Reedy with~\( \catdelta[plus] \)
	being the subcategory containing only injective maps.
	The inclusion \(\smash{ \miota\colon \catdelta[plus]\into\catdelta }\) is
	homotopy-initial
	(\cite[see e.g.][Example~8.5.12]{riehl}
	or \cite[Example 21.2]{dugger}), hence
	\(\smash{
		\invholim[\catdelta]{ \cosimplicial{X}{*} } 
		=
		\invholim[\catdelta[plus]]{ \cosimplicial{X}{*} }
	}\)
	for all~\( \smash{ \cosimplicial{X}{*} \in \catC[diag=\catdelta] } \).
	As \( \catdelta[plus] \)~is a direct category, we obtain
	from~\cref{res:holim_direct_category} that we may
	calculate~\( \invholim[\catdelta]{ \cosimplicial{X}{*} } \)
	as
	\[\textstyle
		\invholim[\catdelta] { \cosimplicial{X}{*} }
		=
		\End[\catdelta[plus]]{
			\fibrep[output=simplicial] (
				\cosimplicial{X}{n}
			)_{n}
		}
	\]
	for some functor
	\( \smash{
		\fibrep
		\colon
		\catC\to\catC[diag={\catdelta[plus,op]}]
	} \)
	that takes~\( \vx \) to an injectively (i.e.~Reedy-) fibrant replacement of the constant diagram at~\( \vx \).
%	This is the same technique used in \textcite{hir} when he develops
%	homotopy limits via “simplicial frames” on diagrams.
	This is the so-called \textdef{fat totalization}
	formula for homotopy limits over~\( \catdelta \).
	The dual formula for homotopy colimits over~\( \catdelta[op] \)
	is called the \textdef{fat geometric realization}
	formula.
\end{examplebreak}

\section*{Acknowledgements}

W would like to thank Edouard Balzin, Marcel B\"okstedt, and Stefan Schwede for many fruitful discussions and for reading through a draft of this paper. Special thanks to Henning Haahr Andersen for many years of great discussions, help, and advice, and for making our cooperation possible in the first place. This paper was written mostly while the authors were visiting the Max Planck Institute for Mathematics in Bonn, Germany. We would like to express our gratitude to the institute for inviting us and for providing us with an excellent and stimulating working environment.

\endgroup

\pagebreak

\printbibliography

\end{document}